\numberwithin{equation}{section}
\newtheorem{theorem}{Theorem}[section]
\newtheorem{lemma}[theorem]{Lemma}
\newtheorem{proposition}[theorem]{Proposition}
\newtheorem{remark}[theorem]{Remark}
\newenvironment{proof}[1][Proof]{\noindent\textbf{#1.} }{\ \rule{0.5em}{0.5em}}
\newcommand{\Lip}{\operatorname{Lip}}
\newcommand{\R}{{\mathbb R}}
\title{Weak Convergence to Stable L\'evy Processes for Nonuniformly Hyperbolic
Dynamical Systems}
\author{
Ian Melbourne\thanks{Mathematics Institute, University of Warwick, Coventry, CV4 7AL, UK  i.melbourne@warwick.ac.uk}
\and
Roland Zweim\"uller\thanks{Faculty of Mathematics, University of Vienna,
Vienna,   Austria}
}
\date{11 January 2013; final version 18 September 2013}
\begin{document}

\maketitle

\begin{abstract}
We consider weak invariance principles (functional limit theorems) in the domain of a stable law.   A general result is obtained on lifting such limit laws from an induced dynamical system to the original system.  An important class of examples covered by our result are Pomeau-Manneville intermittency maps, where convergence for the induced system is in the standard Skorohod $\mathcal{J}_1$ topology.  For the full system, convergence in the $\mathcal{J}_1$ topology fails, but we prove convergence in the $\mathcal{M}_1$ topology.
\end{abstract}

\textbf{Keywords:} Nonuniformly hyperbolic systems, functional limit theorems,
L\'evy processes, induced dynamical systems

\textbf{MSC codes:} 37D25;  28D05, 37A50, 60F17

\section{Introduction}

For large classes of dynamical systems with good mixing properties, it is
possible to obtain strong statistical limit laws such as the central
limit theorems and its refinements including the
almost sure invariance principle (ASIP)~\cite{HofbauerKeller82,DenkerPhilipp84,
ConzeBorgne01,FMT03,MT02,MN05,MN09,BM08,Gouezel10}.
An immediate consequence of the ASIP is the weak invariance principle (WIP) which is the focus of this paper.

Thus the standard WIP (weak convergence to Brownian motion) holds
for general Axiom~A diffeomorphisms and flows, and also for nonuniformly
hyperbolic maps and flows modelled by Young towers~\cite{Young98,Young99} with
square integrable return time function (including
H\'enon-like attractors~\cite{BenedicksYoung00}, finite horizon Lorentz gases~\cite{MN05}, and the Lorenz attractor~\cite{HM07}).

Recently, there has been interest in statistical limit laws for dynamical
systems with weaker mixing properties such as those modelled by a Young tower
where the return time function is not square integrable.   In the borderline case where the return time lies in $L^p$ for all $p<2$,
it is often possible to prove a central limit theorem with nonstandard norming
(nonstandard domain of attraction of the normal distribution).
This includes important examples such as the infinite horizon Lorentz
gas~\cite{SzaszVarju07},
the Bunimovich stadium~\cite{BalintGouezel06} and billiards with cusps~\cite{BalintChernovDolgopyat11}.   In such cases, it is also possible to obtain the corresponding WIP
(see for example~\cite{BalintChernovDolgopyat11,DedeckerMerlevede}).

For Young towers with non-square-integrable return time function,
the central limit theorem generally fails.   Gou\"ezel~\cite{Gouezel04} (see also Zweim\"uller~\cite{Zweimuller03}) obtained definitive results on convergence in distribution to stable laws.   The only available results on the corresponding
WIP are due to Tyran-Kami\'nska~\cite{Tyran-Kaminska10} who gives
necessary and sufficient conditions for weak convergence to the
appropriate stable L\'evy process in the standard
Skorohod $\mathcal{J}_1$ topology~\cite{Skorohod56}.   However
in the situations we are interested in, the $\mathcal{J}_1$
topology is too strong and the results in~\cite{Tyran-Kaminska10}
prove that weak convergence fails in this topology.

In this paper, we repair the situation by working with the
$\mathcal{M}_1$ topology (also introduced by Skorohod~\cite{Skorohod56}).
In particular, we give general conditions for
systems modelled by a Young tower, whereby
convergence in distribution to a stable law can be improved
to weak convergence in the $\mathcal{M}_1$ topology to the corresponding L\'evy process.

The proof is by inducing (see~\cite{Ratner73,MT04,Gouezel07} for proofs by inducing of convergence in distribution).    Young towers by definition have a good inducing
system, namely a Gibbs-Markov map (a Markov map with bounded distortion and big images~\cite{AaronsonDenker01}).   The results of Tyran-Kami\'nska~\cite{Tyran-Kaminska10} often apply positively for such induced maps (see for example the proof of Theorem~\ref{thm-const} below) and yield weak convergence in the $\mathcal{J}_1$ topology, and hence the $\mathcal{M}_1$
topology, for the induced system.    
The main theoretical result of
the present paper discusses how $\mathcal{M}_1$ convergence in an 
induced system lifts to the original system (even when convergence 
in the $\mathcal{J}_1$ topology does not lift).

As a special case, we recover
the aforementioned results~\cite{BalintChernovDolgopyat11,DedeckerMerlevede} on the WIP in the nonstandard domain of
attraction of the normal distribution.

In the remainder of the introduction, we describe how our results
apply to Pomeau-Manneville intermittency maps~\cite{PomeauManneville80}.
In particular, we consider the family of maps $f:X\to X$, $X=[0,1]$,
studied by~\cite{LiveraniSaussolVaienti99}, given by
\begin{align} \label{eq-LSV}
f(x)=\begin{cases}   x(1+2^\gamma x^\gamma), & x\in[0,\frac12] \\
2x-1, & x\in(\frac12,1]
\end{cases}
\end{align}
For $\gamma\in[0,1)$, there is a unique absolutely continuous ergodic invariant probability measure $\mu$.
Suppose that $\phi:X\to\R$ is a H\"older observable with $\int_X \phi\,d\mu=0$.
Let $\phi_n=\sum_{j=0}^{n-1}\phi\circ f^j$.
For the map in~\eqref{eq-LSV}, our main result implies the following:

\begin{theorem} \label{thm-LSV}
Let $f:[0,1]\to[0,1]$ be the map~\eqref{eq-LSV} with $\gamma\in(\frac12,1)$
and set $\alpha=1/\gamma$.    Let $\phi:[0,1]\to\R$ be a mean zero H\"older observable and suppose that $\phi(0)\neq0$.
Define $W_n(t)=n^{-1/\alpha}\phi_{\lfloor nt\rfloor}$.
Then $W_n$ converges weakly in the Skorohod $\mathcal{M}_1$ topology to an $\alpha$-stable L\'evy
process.  (The specific L\'evy process is described below.)
\end{theorem}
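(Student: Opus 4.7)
The plan is to apply the paper's main lifting theorem (to be proved below) to the first return map of $f$ to the reference set $Y=(\tfrac12,1]$. Let $\tau:Y\to\mathbb{N}$ be the first return time of $f$ to $Y$, and let $F=f^\tau:Y\to Y$ be the induced map, with invariant probability $\mu_Y$. Standard analysis of the indifferent fixed point at $0$ (using $f(x)=x+2^\gamma x^{1+\gamma}+O(x^{1+2\gamma})$ near $0$) shows that $F$ is a Gibbs-Markov map with respect to its natural countable partition into cylinders $\{\tau=n\}$, and that the return time has the regular-varying tail $\mu_Y(\tau>n)\sim c_0 n^{-\alpha}$ with $\alpha=1/\gamma\in(1,2)$ and an explicit constant $c_0>0$.

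The first step is to analyze the induced observable $\Phi(y)=\sum_{k=0}^{\tau(y)-1}\phi(f^k y)$. On a cylinder $\{\tau=n\}$ the orbit $y,fy,\dots,f^{n-1}y$ spends almost all its time in a small neighbourhood of $0$, so H\"older continuity and $\phi(0)\neq 0$ give the asymptotic $\Phi(y)=\tau(y)\phi(0)+O(1)$ uniformly on large cylinders. Consequently $\Phi$ inherits the tail behaviour of $\tau$, with $\mu_Y(\Phi>t)\sim c_0|\phi(0)|^{-\alpha}\,t^{-\alpha}\,\mathbf{1}_{\phi(0)>0}+\text{(symmetric term)}$, placing $\Phi$ in the nonstandard domain of attraction of a totally skewed (in the sign of $\phi(0)$) $\alpha$-stable law.

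The second step is to invoke the Tyran-Kami\'nska criterion for the Gibbs-Markov system $(F,\mu_Y,\Phi)$, yielding weak convergence of the Birkhoff sums $n^{-1/\alpha}\Phi_{\lfloor n\,\cdot\,\rfloor}^F$ (with the usual centering appropriate to $\alpha\in(1,2)$) to an $\alpha$-stable L\'evy process in the Skorohod $\mathcal{J}_1$ topology. This is a positive application of \cite{Tyran-Kaminska10} because, on the induced clock, each excursion contributes $\Phi\circ F^j$ as a single instantaneous jump. The third step is then to feed this input into the main lifting theorem of the present paper: $\mathcal{M}_1$ convergence transfers from $(F,\Phi)$ to $(f,\phi)$, producing the stated weak convergence of $W_n$ and identifying the limit L\'evy process.

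The main obstacle, and the heart of the argument that the body of the paper must supply, is the lifting step. On the original clock, what was a single jump of $\Phi$ at an $F$-step becomes a slow, near-monotone climb of $\phi_n$ over $\tau$ consecutive $f$-iterates as the orbit crawls past $0$; this is why $\mathcal{J}_1$ convergence \emph{cannot} hold for $W_n$ but $\mathcal{M}_1$ convergence can. Verifying the lifting hypotheses therefore reduces to showing that each excursion of $f$ towards $0$ contributes a genuinely monotone (in the direction $\operatorname{sgn}\phi(0)$) increment to $\phi_n$ up to a lower-order oscillation, and that the time distortion between the $F$-clock and the $f$-clock induced by $\tau$ is negligible after $\mathcal{M}_1$ rescaling. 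Once the lifting theorem is in place and these structural facts about the LSV excursions are checked, Theorem~\ref{thm-LSV} follows directly.
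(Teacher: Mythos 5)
Your overall architecture matches the paper's: induce on $Y=(\tfrac12,1]$, obtain a $\mathcal{J}_1$ functional limit theorem for the induced observable $\Phi$ from Tyran--Kami\'nska, verify the weak monotonicity hypothesis, and lift via the paper's inducing theorem. However, there is a genuine gap in your second step, and it is precisely where the paper has to work hardest.

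You write that since $\Phi(y)=\tau(y)\phi(0)+O(1)$ and $\Phi$ therefore lies in the domain of attraction of a totally skewed $\alpha$-stable law, the Tyran--Kami\'nska criterion for the Gibbs--Markov map $(F,\mu_Y)$ yields the $\mathcal{J}_1$ WIP for $\Phi$. But being in the domain of attraction of a stable law is not, on its own, sufficient for the \emph{functional} limit theorem in $\mathcal{J}_1$; the positive result from \cite{Tyran-Kaminska10} that the paper actually uses (their Corollary~4.1) requires the observable to be \emph{constant on each partition element} of the Gibbs--Markov system, and a general H\"older $\phi$ does not induce to such a $\Phi$. Your $O(1)$ term is exactly the obstacle: it is bounded on each cylinder, but one must rule out that its Birkhoff sums accumulate to a contribution of order $n^{1/\alpha}$, and that requires an argument, not merely the observation that the term is small on each single excursion. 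The paper resolves this by the decomposition $\phi=\phi_0+\tilde\phi$ with $\phi_0=\phi(0)-\mu(Y)^{-1}\phi(0)1_Y$ piecewise constant (so that $\Phi_0$ is constant on cylinders and Corollary~4.1 applies directly) and $\tilde\phi$ H\"older vanishing at $0$; the induced $\tilde\Phi$ is then shown to lie in $L^p$ for some $p>1/\gamma$ (via Gou\"ezel's estimates), and a martingale/Burkholder argument (Proposition~\ref{prop-GMCLT}) shows $\max_{j\le n}n^{-1/\alpha}\tilde\Phi_j\to_d 0$, so $\tilde\Phi$ is negligible at scale $B(n)=n^{1/\alpha}$. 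This decomposition and the $L^p$--martingale estimate are indispensable and are absent from your proposal.

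Your treatment of the monotonicity step is qualitatively right --- during an excursion the orbit crawls past $0$ where $\phi$ has a fixed sign, so $\phi_\ell$ is monotone up to a bounded error --- but you should note that what one actually verifies is the pointwise bound $\Phi^\ast\le\eta\,B(r)$ of condition~\eqref{Eq_PointwiseMonotonicityCtrl} (indeed, $\Phi^\ast$ is \emph{bounded}), and this uses the specific structure near the neutral fixed point: choosing $\epsilon$ with $\phi>0$ on $[0,\epsilon]$ and splitting each excursion into the tail where $f^\ell y\in[0,\epsilon]$ (where differences are nonpositive in the predominant direction) and a bounded-length remainder.
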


\begin{remark} \label{rmk-JM}
The $\mathcal{J}_1$ and $\mathcal{M}_1$ topologies are reviewed in Section~\ref{sec-JM}.   Roughly speaking, the difference is that the $\mathcal{M}_1$
topology allows numerous small jumps for $W_n$ to accumulate into a large
jump for $W$, whereas the $\mathcal{J}_1$ topology would require a large jump
for $W$ to be approximated by a single large jump for $W_n$.
Since the jumps in $W_n$ are bounded by $n^{-1/\alpha}|\phi|_\infty$, it is evident that in Theorem~\ref{thm-LSV} convergence cannot hold
in the $\mathcal{J}_1$ topology.

Situations in the probability theory literature where convergence holds in
the $\mathcal{M}_1$ topology but not the $\mathcal{J}_1$ topology include
\cite{AvramTaqqu92, BenArousCerny07}.
\end{remark}

Theorem~\ref{thm-LSV} completes the study of weak convergence for the
intermittency map~\eqref{eq-LSV} with $\gamma\in[0,1)$ and typical
H\"older observables.
We recall the previous results
in this direction.
If $\gamma\in[0,\frac12)$ then it is well-known that $\phi$ satisfies
a central limit theorem, so
$n^{-\frac12}\phi_n$ converges in distribution to a normal distribution with mean
zero and variance $\sigma^2$, where $\sigma^2$ is typically positive.
Moreover,~\cite{MN05} proved the ASIP.
An immediate consequence
is the WIP:
$W_n(t)=n^{-\frac12}\phi_{\lfloor nt\rfloor}$ converges weakly to Brownian motion.

If $\gamma=\frac12$ and $\phi(0)\neq0$, then Gou\"ezel~\cite{Gouezel04}
proved that $\phi$ is in the nonstandard domain of attraction of the
normal distribution: $(n\log n)^{-\frac12}\phi_n$ converges in distribution to a normal
distribution with mean zero and variance $\sigma^2>0$.
Dedecker \& Merlevede~\cite{DedeckerMerlevede} obtained the corresponding WIP in this situation (with $W_n(t)=(n\log n)^{-\frac12}\phi_{\lfloor nt\rfloor }$).

Finally, if $\gamma\in(\frac12,1)$ and $\phi(0)\neq0$, then Gou\"ezel~\cite{Gouezel04} proved that $n^{-1/\alpha}\phi_n$ converges in distribution to a one-sided stable
law $G$ with exponent $\alpha=\gamma^{-1}$.
The stable law in question has characteristic function
\[
E(e^{itG})=\exp\bigl\{-c|t|^{\alpha}(1-i\,{\rm sgn}(\phi(0)t)\tan(\alpha\pi/2)\bigr\},
\]
where $c=\frac14 h(\frac12)(\alpha|\phi(0)|)^\alpha\Gamma(1-\alpha)\cos(\alpha\pi/2)$ and $h=\frac{d\mu}{dx}$ is the invariant density.
Let $\{W(t);\,t\ge0\}$ denote the corresponding $\alpha$-stable L\'evy process
(so $\{W(t)\}$ has independent and stationary increments with cadlag sample
paths and $W(t)=_d t^{1/\alpha}G$).
Tyran-Kami\'nska~\cite{Tyran-Kaminska10} verified that
$W_n(t)=n^{-1/\alpha}\phi_{\lfloor nt\rfloor}$
does not converge weakly to $W$ in the $\mathcal{J}_1$ topology.
In contrast, Theorem~\ref{thm-LSV} shows that
$W_n$ converges weakly to $W$ in the $\mathcal{M}_1$ topology.

The remainder of this paper is organised as follows.
In Section~\ref{sec-therewasanoldmanfromJapan_whowroteversethatneverwouldscan_whenaskedwhyitwas_hesaiditsbecause_IliketogetasmanywordsintothelastlineasIpossiblycan} we state our main abstract result,
Theorem~\ref{T_InducingWIP}, on inducing the WIP.
In Section~\ref{sec-proof} we prove Theorem~\ref{T_InducingWIP}.
In Section~\ref{sec-ex} we consider some examples which include Theorem~\ref{thm-LSV} as a special case.

\section{Inducing a weak invariance principle}
\label{sec-therewasanoldmanfromJapan_whowroteversethatneverwouldscan_whenaskedwhyitwas_hesaiditsbecause_IliketogetasmanywordsintothelastlineasIpossiblycan}

In this section, we formulate our main abstract result Theorem~\ref{T_InducingWIP}.
The result is stated in Subsection~\ref{Sec_TheInducingThm} after some preliminaries
in Subsection~\ref{sec-JM}.

\subsection{Preliminaries}
\label{sec-JM}

\textbf{Distributional convergence.} To fix notations, let $(X,P)$ be a
probability space and $(R_{n})_{n\geq1}$ a sequence of
Borel measurable maps $R_{n}:X\rightarrow S$, where $(S,d)$ is a separable
metric space. Then distributional convergence of $(R_{n})_{n\geq1} $ w.r.t.\
$P$ to some random element $R$ of $S$ will be denoted by $R_{n}\overset
{P}{\Longrightarrow}R$. \emph{Strong distributional convergence}%
\textit{\ }$R_{n}\overset{\mathcal{L}(\mu)}{\Longrightarrow}R$ on a
measure space $(X,\mu)$ means that $R_{n}\overset{P}{\Longrightarrow}R$
for all probability measures $P\ll\mu$.%

\vspace{0.3cm}%

\noindent
\textbf{Skorohod spaces.} We briefly review the required background material
on the Skorohod $\mathcal{J}_{1}$ and $\mathcal{M}_{1}$ topologies~\cite{Skorohod56} on the
linear spaces $\mathcal{D}[0,T]$, $\mathcal{D}[T_{1},T_{2}]$, and
$\mathcal{D}[0,\infty)$ of real-valued \emph{cadlag} functions (right-continuous
$g(t^{+})=g(t)$ with left-hand limits $g(t^{-})$) on the respective interval,
referring to~\cite{Whitt} for proofs and further information. Both topologies are
Polish, with $\mathcal{J}_{1}$ stronger than $\mathcal{M}_{1}$.

It is customary to first deal with bounded time intervals. We thus fix some
$T>0$ and focus on $\mathcal{D}=\mathcal{D}[0,T]$. (Everything carries over to
$\mathcal{D}[T_{1},T_{2}]$ in an obvious fashion.) Throughout, $\left\Vert
.\right\Vert $ will denote the uniform norm. Two functions $g_{1},g_{2}%
\in\mathcal{D}$ are close in the $\mathcal{J}_{1}$\emph{-topology} if they are
uniformly close after a small distortion of the domain. Formally, let
$\Lambda$ be the set of increasing homeomorphisms $\lambda:[0,T]\rightarrow
\lbrack0,T]$, and let $\lambda_{id}\in\Lambda$ denote the identity. Then
$d_{\mathcal{J}_{1},T}(g_{1},g_{2})=\inf_{\lambda\in\Lambda}\left\{
\left\Vert g_{1}\circ\lambda-g_{2}\right\Vert \vee\left\Vert \lambda
-\lambda_{id}\right\Vert \right\}  $ defines a metric on $\mathcal{D}$ which
induces the $\mathcal{J}_{1}$-topology. While its restriction to
$\mathcal{C}=\mathcal{C}[0,T]$ coincides with the uniform topology,
discontinuous functions are $\mathcal{J}_{1}$-close to each other if they have
jumps of similar size at similar positions.

In contrast, the $\mathcal{M}_{1}$\emph{-topology} allows a function $g_{1}$
with a jump at $t$ to be approximated arbitrarily well by some continuous
$g_{2}$ (with large slope near $t$).\ For convenience, we let $[a,b]$ denote
the (possibly degenerate) closed interval with endpoints $a,b\in\mathbb{R}$,
irrespective of their order. Let $\Gamma(g):=\{(t,x)\in\lbrack0,T]\times
\mathbb{R}:x\in\lbrack g(t^{-}),g(t)]\}$ denote the completed graph of $g$,
and let $\Lambda^{\ast}(g)$ be the set of all its parametrizations, that is,
all continuous $G=(\lambda,\gamma):[0,T]\rightarrow\Gamma(g)$ such that
$t^{\prime}<t$ implies either $\lambda(t^{\prime})<\lambda(t)$ or
$\lambda(t^{\prime})=\lambda(t)$ plus $\left\vert \gamma(t)-g(\lambda
(t))\right\vert \leq \left\vert \gamma(t^{\prime})-g(\lambda(t))\right\vert $.
Then $d_{\mathcal{M}_{1},T}(g_{1},g_{2})=\inf_{G_{i}=(\lambda_{i},\gamma
_{i})\in\Lambda^{\ast}(g_{i})}\left\{  \left\Vert \lambda_{1}-\lambda
_{2}\right\Vert \vee\left\Vert \gamma_{1}-\gamma_{2}\right\Vert \right\}  $
gives a metric inducing $\mathcal{M}_{1}$.

On the space $\mathcal{D}[0,\infty)$ the $\mathcal{\tau}$-topology, $\tau
\in\{\mathcal{J}_{1},\mathcal{M}_{1}\}$, is defined by the metric
$d_{\tau,\infty}(g_{1},g_{2}):=\int_{0}^{\infty}e^{-t}(1\wedge d_{\tau
,t}(g_{1},g_{2}))\,dt$. Convergence $g_{n}\rightarrow g$ in $(\mathcal{D}%
[0,\infty),\tau)$ means that $d_{\tau,T}(g_{n},g)\rightarrow0$ for every
continuity point $T$ of $g$.

For either topology, the corresponding Borel $\sigma$-field $\mathcal{B}%
_{\mathcal{D},\mathcal{\tau}}$ on $\mathcal{D}$, generated by the $\tau$-open
sets, coincides with the usual $\sigma$-field $\mathcal{B}_{\mathcal{D}}$
generated by the canonical projections $\pi_{t}(g):=g(t)$. Therefore, any
family $W=(W_{t})_{t\in\lbrack0,T]}$ or $(W_{t})_{t\in\lbrack0,\infty)}$ of
real random variables $W_{t}$ such that each path $t\mapsto W_{t}$ is cadlag,
can be regarded as a random element of $\mathcal{D}$, equipped with
$\tau=\mathcal{J}_{1}$ or $\mathcal{M}_{1}$.

\subsection{Statement of the main result}
\label{Sec_TheInducingThm}

Recall that for any ergodic measure preserving transformation
(\emph{m.p.t.}) $f$ on a probability
space $(X,\mu)$, and any $Y\subset X$ with $\mu(Y)>0$, the
\emph{return time} function $r:Y\rightarrow\mathbb{N}\cup\{\infty\}$ given by
$r(y):=\inf\{k\geq1:f^k(y)\in Y\}$ is integrable with mean $\int_{Y}%
r\,d\mu_{Y}=\mu(Y)^{-1}$ (Kac' formula), where $\mu_{Y}(A):=\mu(Y\cap
A)/\mu(Y)$.
Moreover, the \emph{first return map} or \emph{induced map} $F:=f^{r}:Y\rightarrow Y$ is
an ergodic m.p.t.\ on the probability space $(Y,\mu_{Y})$. This
is widely used as a tool in the study of complicated systems, where $Y$ is
chosen in such a way that $F$ is more convenient than $f$. In particular,
given an \emph{observable} (i.e.\ a measurable function) $\phi:X\rightarrow
\mathbb{R}$, it may be easier to first consider its \emph{induced version}
$\Phi:Y\rightarrow\mathbb{R}$ on $Y$, given by $\Phi:=\sum_{\ell=0}^{r-1}%
\phi\circ f^{\ell}$. By standard arguments, if $\phi\in L^1(X,\mu)$ then
$\Phi\in L^1(Y,\mu_Y)$ and $\int_Y \Phi \, d\mu_Y  = \mu(Y)^{-1} \int_X \phi \, d\mu$.
In this setup, we will denote the corresponding ergodic
sums by $\phi_k:=\sum_{\ell=0}^{k-1}\phi\circ f^{\ell}$\ and $\Phi_{n}%
:=\sum_{j=0}^{n-1}\Phi\circ F^{j}$, respectively.

Our core result allows us to pass from a weak invariance principle for the
induced version to one for the original observable. Such a step requires some
a priori control of the behaviour of ergodic sums $\phi_k$ during an
excursion from $Y$. We shall express this in terms of the function $\Phi
^{\ast}:Y\rightarrow\lbrack0,\infty]$ given by
\begin{align*}
\Phi^{\ast}(y):=\Bigl(\underset{0\leq\ell^{\prime}\le\ell\leq r(y)}{\max}\left(
\phi_{\ell^{\prime}}(y)-\phi_{\ell}(y)\right)  \Bigr)\wedge\Bigl(\underset{0\leq
\ell^{\prime}\le\ell\leq r(y)}{\max}\left(  \phi_{\ell}(y)-\phi_{\ell^{\prime}%
}(y)\right) \Bigr)\text{.}%
\end{align*}
Note that $\Phi^{\ast}$ vanishes if and only if the ergodic sums $\phi_k$
grow monotonically (nonincreasing or nondecreasing) during each excursion.
Hence bounding $\Phi^{\ast}$ means limiting the growth of $\phi_{\ell}$ until the
first return to $Y$ in at least one direction.
The expression $\Phi^\ast$ can be understood also in terms of the maximal
and minimal processes $\phi^\uparrow_\ell,\phi^\downarrow_\ell$ defined during each excursion
$0\le\ell\le r(y)$ by
\[
\phi^\uparrow_\ell(y)=\max_{0\le \ell^\prime\le \ell}\phi_{\ell^\prime}(y),\quad
\phi^\downarrow_\ell(y)=\min_{0\le \ell^\prime\le \ell}\phi_{\ell^\prime}(y).
\]
\begin{proposition} \label{prop-predominant}
\mbox{}
\newline
\noindent (i) In the ``predominantly increasing'' case
$\Phi^{\ast}(y)=\underset{0\leq\ell^{\prime}\le\ell\leq r(y)}{\max}\left(
\phi_{\ell^{\prime}}(y)-\phi_{\ell}(y)\right)$, we have
$\Phi^{\ast}(y)=\underset{0\le\ell\le r(y)}{\max}\left(\phi^\uparrow_\ell(y)-\phi_\ell(y)\right)$.

\vspace{2ex}
\noindent
(ii) In the ``predominantly decreasing'' case
$\Phi^\ast(y)= \underset{0\leq
 \ell^{\prime}\le\ell\leq r(y)}{\max}\left(  \phi_{\ell}(y)-\phi_{\ell^{\prime}%
 }(y)\right)$, we have
$\Phi^{\ast}(y)=\underset{0\le\ell\le r(y)}{\max}\left(\phi_\ell(y)-\phi^\downarrow_\ell(y)\right)$.
\end{proposition}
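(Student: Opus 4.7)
The plan is very short: both parts reduce to swapping the order of a nested pair of maxima and then reading off the definitions of $\phi^\uparrow_\ell$ and $\phi^\downarrow_\ell$. The hypothesis ``predominantly increasing'' (respectively ``predominantly decreasing'') is used only to identify $\Phi^\ast(y)$ with one of the two maxima entering its definition; the identity between that maximum and the rewritten expression involving $\phi^\uparrow$ or $\phi^\downarrow$ holds for every $y\in Y$ with finite return time.

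For (i), I would rewrite the double max as an iterated max (outer over $\ell$, inner over $\ell'\le\ell$), pulling the $\phi_\ell(y)$ term, which does not depend on $\ell'$, outside the inner max:
\begin{align*}
\max_{0\le\ell'\le\ell\le r(y)}\bigl(\phi_{\ell'}(y)-\phi_\ell(y)\bigr)
&=\max_{0\le\ell\le r(y)}\Bigl(\max_{0\le\ell'\le\ell}\phi_{\ell'}(y)-\phi_\ell(y)\Bigr)\\
&=\max_{0\le\ell\le r(y)}\bigl(\phi^\uparrow_\ell(y)-\phi_\ell(y)\bigr),
\end{align*}
where the final equality is the definition of $\phi^\uparrow_\ell$. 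By the hypothesis of (i), the left-hand side equals $\Phi^\ast(y)$, so the claim follows.

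For (ii), the computation is entirely symmetric, using $-\max_{\ell'\le\ell}(-\phi_{\ell'}) = \min_{\ell'\le\ell}\phi_{\ell'} = \phi^\downarrow_\ell$:
\begin{align*}
\max_{0\le\ell'\le\ell\le r(y)}\bigl(\phi_\ell(y)-\phi_{\ell'}(y)\bigr)
&=\max_{0\le\ell\le r(y)}\Bigl(\phi_\ell(y)-\min_{0\le\ell'\le\ell}\phi_{\ell'}(y)\Bigr)\\
&=\max_{0\le\ell\le r(y)}\bigl(\phi_\ell(y)-\phi^\downarrow_\ell(y)\bigr),
\end{align*}
and the hypothesis of (ii) converts this into the stated formula for $\Phi^\ast(y)$.

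There is no real obstacle: the content of the proposition is merely a convenient reformulation of the dominant term in $\Phi^\ast$ in terms of the running maximum or minimum of the ergodic-sum process during an excursion. This reformulation is useful because $\phi^\uparrow_\ell-\phi_\ell$ and $\phi_\ell-\phi^\downarrow_\ell$ are nondecreasing in $\ell$ and are easier to handle in subsequent estimates.
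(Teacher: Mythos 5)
Your proof is correct and is the same argument the paper has in mind; the paper simply states that the identity is immediate from the definitions of $\phi^\uparrow_\ell$ and $\phi^\downarrow_\ell$, and you have spelled out the one-line swap of iterated maxima that makes it immediate.
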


\begin{proof}
This is immediate from the definition of $\phi^\uparrow_\ell$ and
$\phi^\downarrow_\ell$.
\end{proof}

\vspace{0.3cm}%

We use $\Phi^{\ast}$ to impose
a weak monotonicity condition for $\phi_{\ell}$ during excursions.

\begin{theorem}
[\textbf{Inducing a weak invariance principle}]\label{T_InducingWIP}Let $f$ be
an ergodic m.p.t.\ on the probability space $(X,\mu)$, and
let $Y\subset X$ be a subset of positive measure with return time $r$ and first
return map $F$. Suppose that the observable $\phi:X\rightarrow\mathbb{R}$ is
such that its induced version $\Phi$ satisfies a WIP on $(Y,\mu_{Y})$ in that
\begin{equation}
\left(  P_{n}(t)\right)  _{t\geq0}:=\left(  \frac{\Phi_{\left\lfloor
tn\right\rfloor }}{B(n)}\right)  _{t\geq0}\overset{\mathcal{L}(\mu_{Y}%
)}{\Longrightarrow}\left(  W(t)\right)  _{t\geq0}\text{\quad in }%
(\mathcal{D}[0,\infty),\mathcal{M}_{1})\text{,}\label{Eq_WIPinduced}%
\end{equation}
where $B$ is regularly varying of index $\gamma>0$, and $\left(  W(t)\right)
_{t\geq0}$ is a process with cadlag paths. Moreover, assume that
\begin{equation}
\frac{1}{B(n)}\left(  \max_{0\leq j\leq n}\Phi^{\ast}\circ F^{j}\right)
\overset{\mu_{Y}}{\Longrightarrow}0\text{.}\label{Eq_EssMonotonicity}%
\end{equation}
Then $\phi$ satisfies a WIP on $(X,\mu)$ in that
\begin{equation}
\left(  W_{n}(s)\right)  _{s\geq0}:=\left(  \frac{\phi_{\left\lfloor
sn\right\rfloor }}{B(n)}\right)  _{s\geq0}\overset{\mathcal{L}(\mu
)}{\Longrightarrow}\left(  W(s\mu(Y))\right)  _{s\geq0}\text{\ in
}(\mathcal{D}[0,\infty),\mathcal{M}_{1})\text{.}\label{Eq_InducedWIPinM1}%
\end{equation}

\end{theorem}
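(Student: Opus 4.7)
The plan is to first lift \eqref{Eq_WIPinduced} to a WIP for $\phi$ on $(Y,\mu_{Y})$ by a time change, and then to transfer the result from $\mu_{Y}$ to $\mu$ using the Kakutani-tower representation. Write $r_n(y):=\sum_{j=0}^{n-1}r\circ F^{j}(y)$ for the $n$-th return time and $N(K):=\max\{n\geq0:r_n\leq K\}$ for the number of returns by time $K$. The fundamental excursion decomposition is
\[
\phi_K(y)\;=\;\Phi_{N(K)}(y)\;+\;\tilde\phi_{K-r_{N(K)}(y)}(F^{N(K)}(y)),
\]
where $\tilde\phi_k(y'):=\sum_{\ell=0}^{k-1}\phi(f^\ell y')$ is the partial sum inside the excursion starting at $y'$. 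A short case analysis based on Proposition~\ref{prop-predominant} shows that for $0\le k\le r(y')$ the value $\tilde\phi_k(y')$ stays within $\Phi^{\ast}(y')$ of the segment joining $\tilde\phi_0(y')=0$ to $\tilde\phi_{r(y')}(y')=\Phi(y')$; this weak monotonicity is precisely what the $\mathcal{M}_{1}$ topology is able to exploit.

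For the time change, the Birkhoff ergodic theorem (applied to $F$ with the integrable observable $r$) combined with Kac's formula $\int_Y r\,d\mu_Y=1/\mu(Y)$ gives $r_n/n\to 1/\mu(Y)$ $\mu_{Y}$-a.s., hence $\psi_n(s):=N(\lfloor sn\rfloor)/n\to s\mu(Y)=:\psi(s)$ uniformly on compacts $\mu_{Y}$-a.s.\ (by Dini, since $\psi$ is continuous and strictly increasing). Whitt's composition theorem for the $\mathcal{M}_{1}$ topology, together with the hypothesis~\eqref{Eq_WIPinduced}, then yields
\[
\bigl(\Phi_{N(\lfloor sn\rfloor)}(y)/B(n)\bigr)_{s\ge0}\;=\;P_n\circ\psi_n\;\overset{\mathcal{L}(\mu_{Y})}{\Longrightarrow}\;W(\cdot\,\mu(Y))\quad\text{in }(\mathcal{D}[0,\infty),\mathcal{M}_{1}).
\]
The process $\tilde W_n(s):=\phi_{\lfloor sn\rfloor}/B(n)$ agrees with $P_n\circ\psi_n$ at each $s=r_j(y)/n$ and, between consecutive returns, differs from it exactly by the excursion-versus-segment discrepancy bounded in the previous paragraph. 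Since there are at most $N(\lfloor Tn\rfloor)\le Tn\mu(Y)+o(n)$ excursions fitting into $s\in[0,T]$, hypothesis~\eqref{Eq_EssMonotonicity} (together with the regular variation of $B$, which absorbs the $T\mu(Y)$ factor) makes the maximal excursion deviation $o_{\mu_{Y}}(1)$. Constructing common parametrizations of the two completed graphs which traverse each return-jump in lockstep then shows $d_{\mathcal{M}_{1},T}(\tilde W_n,P_n\circ\psi_n)\overset{\mu_{Y}}{\Longrightarrow}0$ for every $T>0$, whence $\tilde W_n\overset{\mathcal{L}(\mu_{Y})}{\Longrightarrow}W(\cdot\,\mu(Y))$.

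Lastly, the transfer from $\mu_{Y}$ to $\mu$ uses the decomposition $X=\bigsqcup_{j\ge0}f^{j}(Y\cap\{r>j\})$. For $x=f^{\kappa(x)}\pi(x)$ with $\pi(x)\in Y$ and $0\le\kappa(x)<r(\pi(x))$, one has $\phi_{\lfloor sn\rfloor}(x)=\phi_{\lfloor sn\rfloor+\kappa(x)}(\pi(x))-\phi_{\kappa(x)}(\pi(x))$, so $W_n(\cdot)(x)$ differs from the shifted $\tilde W_n(\cdot+\kappa(x)/n)(\pi(x))$ only by the additive constant $\phi_{\kappa(x)}(\pi(x))/B(n)$. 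For $\mu$-a.e.~$x$ the height $\kappa(x)$ is finite, so both the time shift $\kappa(x)/n$ and the additive constant vanish as $n\to\infty$ and are $\mathcal{M}_{1}$-negligible; a routine Zweim\"uller-type argument then upgrades $\mathcal{L}(\mu_{Y})$- to $\mathcal{L}(\mu)$-convergence. I expect the main obstacle to be the $\mathcal{M}_{1}$ estimate in the middle step: one must build matched parametrizations of the completed graphs of $\tilde W_n$ and $P_n\circ\psi_n$ which synchronize at every return time and route the in-excursion oscillations along the short segment joining $\Phi_{j-1}(y)/B(n)$ to $\Phi_j(y)/B(n)$. Proposition~\ref{prop-predominant} is indispensable here: without its weak monotonicity structure, the excursion path could leave that segment by more than $\Phi^{\ast}$, and no common parametrization with small vertical error would exist.
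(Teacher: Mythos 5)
Your strategy matches the paper's proof step for step: the same excursion decomposition $\phi_{K}=\Phi_{N_{K}}+(\text{incomplete last excursion})$; the same time change $\psi_n(s)=n^{-1}N_{\lfloor sn\rfloor}\to s\mu(Y)$ fed into Whitt's $\mathcal{M}_1$-composition theorem (giving $P_n\circ\psi_n\Rightarrow W(\cdot\,\mu(Y))$); the same idea of bounding $d_{\mathcal{M}_1,T}(W_n,\,P_n\circ\psi_n)$ excursion by excursion via $\Phi^\ast$ together with Lemma~\ref{L_ReturnTimeControl} and condition~\eqref{Eq_EssMonotonicity}; and the same upgrade from $\overset{\mu_Y}{\Longrightarrow}$ to $\overset{\mathcal{L}(\mu)}{\Longrightarrow}$ via Proposition~\ref{P_AutomaticStrongDistrCge} (which already does the Kakutani-tower bookkeeping you sketch, so you could invoke it directly). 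Your reference to Dini for the uniform convergence of $\psi_n$ is slightly misdirected, since $\psi_n$ is discontinuous; the paper's elementary Lemma~\ref{L_HistoryOfSequence} is what actually delivers the uniformity from the pointwise ergodic theorem. These are cosmetic.

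The one genuine gap is exactly the step you yourself flag as the ``main obstacle'': the per-excursion $\mathcal{M}_1$ bound. The paper establishes this as a standalone estimate (Lemma~\ref{lem-M1b}(ii)): with $\bar g_j$ the L-shaped step function equal to $g_j(T_{j-1})$ on $[T_{j-1},T_j)$ and to $g_j(T_j)$ at $T_j$, one has $d_{\mathcal{M}_1,[T_{j-1},T_j]}(g_j,\bar g_j)\le 2g_j^\ast+(T_j-T_{j-1})$. Its proof passes through the \emph{running-maximum envelope} $g_j^\uparrow$ (monotone), using $\|g_j-g_j^\uparrow\|\le g_j^\ast$ and $\|\bar g_j^\uparrow-\bar g_j\|\le g_j^\ast$, and then bounds $d_{\mathcal{M}_1}(g_j^\uparrow,\bar g_j^\uparrow)\le T_j-T_{j-1}$ with an explicit pair of matched parametrizations (the geometric picture in Figure~\ref{fig-M1}). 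Your intermediate claim — that the excursion values stay within $\Phi^\ast$ of the interval $[0,\Phi(y')]$ — is true (via Proposition~\ref{prop-predominant}), but it is not, by itself, what produces the common parametrization: a path could stay in that range while oscillating back and forth, and then there is no small-vertical-error matching with the step function's vertical segment. What does the work is that $g_j$ stays within $g_j^\ast$ of the \emph{monotone} envelope $g_j^\uparrow$, and monotone functions are $\mathcal{M}_1$-close to their step versions at a horizontal cost only of the interval length. Once you record and prove that lemma, your outline becomes the paper's proof.
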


\begin{remark}[$\alpha$-stable processes]  If the process $W$ in~\eqref{Eq_WIPinduced} for the induced
system is an $\alpha$-stable L\'evy process,
then the limiting process in~\eqref{Eq_InducedWIPinM1} is $(\int_Y r\,d\mu_Y)^{-1/\alpha}W$.
\end{remark}

\begin{remark}
In general, the convergence from (\ref{Eq_InducedWIPinM1}) fails in
$(\mathcal{D}[0,\infty),\mathcal{J}_{1})$, even if (\ref{Eq_WIPinduced}) holds
in the $\mathcal{J}_{1}$-topology. That this is the case for the intermittent
maps~\eqref{eq-LSV} was pointed out in \cite[Example 2.1]{Tyran-Kaminska10}.
\end{remark}

\begin{remark}[Continuous sample paths]
If the process $W$ in~\eqref{Eq_WIPinduced} for the induced system has continuous sample paths, then the statement and proof of Theorem~\ref{T_InducingWIP} is greatly
simplified and the uniform topology (corresponding to the uniform norm $\|\;\|$)
can be used throughout.   In particular, the function $\Phi^{\ast}$ is replaced by
$\Phi^{\ast}(y)=\max_{0\le\ell<r(y)}|\phi_\ell(y)|$.   In the case of normal diffusion $B(n)=n^{\frac12}$, condition~\eqref{Eq_EssMonotonicity} is then satisfied if $\Phi^{\ast}\in L^2$.

A simplified proof based on the one presented here is written out in~\cite[Appendix]{GMsub}.
\end{remark}

\begin{remark}[Centering]
In the applications that we principally have in mind (including the maps~\eqref{eq-LSV}),
the observable $\phi:X\to\R$ is integrable, and hence so is its induced version $\Phi:Y\to\R$.
In particular, if $\phi$ has mean zero, then $\Phi$ has mean zero and we are
in a situation to apply Theorem~\ref{T_InducingWIP}.
From this, it follows easily that if condition~\eqref{Eq_WIPinduced} holds with
\[
P_n(t)=\frac{\Phi_{\lfloor tn\rfloor}-tn \int_Y\Phi\,d\mu_Y}{B(n)},
\]
and condition~\eqref{Eq_EssMonotonicity} holds with $\phi$ replaced throughout by
$\phi-\int_X\phi\,d\mu$ in the definition of $\Phi^{\ast}$,
then conclusion~\eqref{Eq_InducedWIPinM1} is valid with
\[
W_n(s)=\frac{\phi_{\lfloor sn\rfloor}-sn \int_X\phi\,d\mu}{B(n)}.
\]

With a little more effort it is also possible to handle more general centering sequences where
the process $(P_n)$ in condition~\eqref{Eq_WIPinduced} takes the form
\[
P_n(t)=\frac{\Phi_{\lfloor tn\rfloor}-tA(n)}{B(n)},
\]
for real sequences $A(n)$, $B(n)$ with $B(n)\to\infty$.
\end{remark}

The monotonicity condition (\ref{Eq_EssMonotonicity}) will be shown to hold,
for example, if we have sufficiently good pointwise control for single
excursions:

\begin{proposition}
[\textbf{Pointwise weak monotonicity}]\label{P_PointwiseMonotonicity}Let $f$
be an ergodic m.p.t.\ on the probability space $(X,\mu)$, and
let $Y\subset X$ be a subset of positive measure with return time $r$. Let $B$ be
regularly varying of index $\gamma>0$. Suppose that for the observable
$\phi:X\rightarrow\mathbb{R}$ there is some $\eta\in(0,\infty)$ such that for
a.e.\ $y\in Y$,
\begin{equation}
\Phi^{\ast}(y)\leq\eta\,B(r(y))\text{.}\label{Eq_PointwiseMonotonicityCtrl}%
\end{equation}
Then the weak monotonicity condition (\ref{Eq_EssMonotonicity}) holds.
\end{proposition}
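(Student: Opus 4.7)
The idea is to convert the pointwise hypothesis on $\Phi^{\ast}$ into a statement about the return time $r$, then exploit regular variation of $B$ together with the fact that Kac's formula forces $r\in L^1(Y,\mu_Y)$.

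Applying~(\ref{Eq_PointwiseMonotonicityCtrl}) at each iterate gives $\max_{0\le j\le n}\Phi^{\ast}\circ F^j\le \eta\max_{0\le j\le n}B(r\circ F^j)$, so it suffices to show $B(n)^{-1}\max_{0\le j\le n}B(r\circ F^j)\to 0$ in $\mu_Y$-probability. Fixing $\epsilon>0$, a union bound combined with the $F$-invariance of $\mu_Y$ yields
\[
\mu_Y\Bigl(\max_{0\le j\le n}B(r\circ F^j)>\epsilon B(n)\Bigr)\le (n+1)\,\mu_Y\bigl(B(r)>\epsilon B(n)\bigr),
\]
and the problem is reduced to showing $n\,\mu_Y(B(r)>\epsilon B(n))\to 0$.

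To translate the event $\{B(r)>\epsilon B(n)\}$ into one involving $r$ itself, I would invoke Potter's bounds: for any $\delta\in(0,\gamma)$ there exist $C>0$ and $n_0$ such that $B(m)/B(n)\le C(m/n)^{\gamma-\delta}$ whenever $m\le n$ and $n\ge n_0$. Combined with $B(n)\to\infty$, this gives, for all sufficiently large $n$,
\[
\{B(r)>\epsilon B(n)\}\subset\{r>c_\epsilon n\},\qquad c_\epsilon:=(\epsilon/C)^{1/(\gamma-\delta)}\wedge 1.
\]
Kac's formula ensures $r\in L^1(Y,\mu_Y)$, so
\[
(n+1)\,\mu_Y(r>c_\epsilon n)\le \frac{n+1}{c_\epsilon n}\int_{\{r>c_\epsilon n\}}r\,d\mu_Y\longrightarrow 0
\]
by dominated convergence, proving~(\ref{Eq_EssMonotonicity}).

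The only real subtlety is the Potter estimate, which rules out $B(r)$ being comparable to $B(n)$ while $r$ is much smaller than $n$; without something of this type, the potential non-monotonicity of a general regularly varying $B$ would be an obstacle. Once this estimate is available, the remainder is a routine combination of a union bound, regular variation, and the $L^1$-integrability of $r$.
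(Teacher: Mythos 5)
Your proof is correct, but it takes a genuinely different route from the paper. The paper's argument first reduces (WLOG) to a nondecreasing $B$, then invokes Lemma~\ref{L_ReturnTimeControl} (an ergodic-theorem consequence) to get the almost-everywhere estimate $n^{-1}\max_{0\le j\le n}(r\circ F^j)\to 0$, and finally combines this with the regular variation limit $B(\widehat\delta n)/B(n)\to\widehat\delta^\gamma$. You instead bypass the ergodic theorem: a union bound over iterates (using only $F$-invariance of $\mu_Y$) reduces the claim to $n\,\mu_Y(B(r)>\epsilon B(n))\to 0$, and then Potter's bounds plus the tail estimate $n\,\mu_Y(r>c n)\le c^{-1}\int_{\{r>cn\}}r\,d\mu_Y\to 0$ (from $r\in L^1$, i.e.\ Kac) finishes the job. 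Your approach has the minor advantage of not needing the monotonicity reduction for $B$, since Potter's bounds already handle possibly non-monotone regularly varying functions, and it replaces a pathwise (a.e.) ingredient with a purely distributional one. Both proofs ultimately rest on the same two facts — regular variation of $B$ with positive index and $L^1$-integrability of the return time — and both yield the required convergence in $\mu_Y$-probability.

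One small point worth making explicit in your write-up: the inclusion $\{B(r)>\epsilon B(n)\}\subset\{r>c_\epsilon n\}$ for large $n$ requires a short case analysis on whether $r$ is below the Potter threshold $n_0$; in that regime $B(r)$ is bounded by $\max_{j<n_0}B(j)$ and one appeals to $B(n)\to\infty$ rather than to Potter. You allude to this with ``combined with $B(n)\to\infty$,'' but spelling out the two cases would make the step airtight.
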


The proofs of Theorem~\ref{T_InducingWIP} and Proposition~\ref{P_PointwiseMonotonicity} are given in Section~\ref{sec-proof}.

Assuming strong distributional convergence $\overset{\mathcal{L}(\mu_{Y}%
)}{\Longrightarrow}$ in (\ref{Eq_WIPinduced}), rather than $\overset{\mu_{Y}%
}{\Longrightarrow}$, is not a restriction, as an application of the following
result to the induced system $(Y,\mu_{Y},F)$ shows.

\begin{proposition}
[\textbf{Automatic strong distributional convergence}]%
\label{P_AutomaticStrongDistrCge}Let $f$ be an ergodic m.p.t.\ on a $\sigma
$-finite space $(X,\mu)$.
Let $\tau=\mathcal{J}_{1}$ or $\mathcal{M}_1$ and let
$A(n),B(n)$ be real sequences with $B(n)\to\infty$,
Assume that $\phi:X\rightarrow
\mathbb{R}$ is measurable, and that
there is some probability measure $P\ll\mu$
and some random element $R$ of $\mathcal{D}[0,\infty)$ such that
\begin{equation}
R_{n}:=\left(  \frac{\phi_{\left\lfloor tn\right\rfloor }-tA(n)}{B(n)}\right)
_{t\geq0}\overset{P}{\Longrightarrow}R\text{\quad in }(\mathcal{D}%
[0,\infty),\tau)\text{,}\label{Eq_abcdefg}%
\end{equation}
Then, $R_{n}\overset{\mathcal{L}(\mu)}{\Longrightarrow}R$\ in
$(\mathcal{D}[0,\infty),\tau)$.
\end{proposition}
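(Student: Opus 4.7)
The plan is to apply an Eagleson--Zweim\"uller type extension principle:
for an ergodic m.p.t.\ $f$ on a $\sigma$-finite $(X,\mu)$ and a sequence
of measurable maps $R_n\colon X\to(S,d)$ into a Polish space, if
$R_n\overset{P}{\Longrightarrow}R$ under some $P\ll\mu$ and the sequence
is \emph{asymptotically $f$-invariant}, i.e.\ $d(R_n,R_n\circ f)\to 0$
in $\mu$-measure on every set of finite $\mu$-measure, then
$R_n\overset{\mathcal{L}(\mu)}{\Longrightarrow}R$. The classical proof
uses ergodicity of $f$ to approximate bounded test functions by
$f$-coboundaries $h-h\circ f$ up to constants, and then combines the
$f$-invariance of $\mu$ with the asymptotic invariance to collapse the
coboundary contributions. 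Since both $(\mathcal{D}[0,\infty),\mathcal{J}_1)$
and $(\mathcal{D}[0,\infty),\mathcal{M}_1)$ are Polish, the task reduces
to verifying the asymptotic $f$-invariance of the sequence in
(\ref{Eq_abcdefg}).

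A direct computation using $\lfloor tn+1\rfloor=\lfloor tn\rfloor+1$
yields
\[
R_n(fx)(t)=R_n(x)(t+1/n)+c_n(x),\qquad
c_n(x):=\frac{A(n)/n-\phi(x)}{B(n)}.
\]
Hence $R_n\circ f$ is obtained from $R_n$ by a time translation of $1/n$
(which contributes $O(1/n)$ to both $d_{\mathcal{J}_1,T}$ and
$d_{\mathcal{M}_1,T}$ via the obvious homeomorphism in $\Lambda$ within
$1/n$ of the identity, hence $o(1)$ in $d_{\tau,\infty}$), plus a constant
vertical shift by $c_n(x)$, adding $|c_n(x)|$ to either metric. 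The
random piece $\phi(x)/B(n)$ of $c_n$ tends to $0$ $\mu$-a.e.\ (since
$\phi$ is a.e.\ finite and $B(n)\to\infty$), hence in $\mu$-measure on
every finite-measure subset.

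It remains to show that the deterministic part $A(n)/(nB(n))\to 0$. To
this end, evaluate $R_n$ at the shrinking time $s_n=1/n$:
\[
R_n(1/n)=\frac{\phi(x)-A(n)/n}{B(n)}.
\]
Since $R_n(0)=0$ for every $n$, the limit process $R$ satisfies $R(0)=0$
a.s., and $0$ is a continuity point of the right-continuous path $R$.
By Skorohod's representation theorem (valid since both topologies are
Polish) and the standard property that $\tau$-convergence of cadlag
paths yields $g_n(s_n)\to g(0)$ whenever $s_n\to 0$ to a continuity
point (see \cite[Chapter~12]{Whitt} for $\mathcal{M}_1$; the
$\mathcal{J}_1$ case is classical), one obtains $R_n(1/n)\to 0$ in
$P$-distribution. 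Combined with $\phi(x)/B(n)\to 0$ in $P$-probability,
this forces $A(n)/(nB(n))\to 0$ as a deterministic sequence. The
asymptotic $f$-invariance is thereby verified, and the extension
principle concludes the proof. The only non-routine step is the
extraction of the deterministic asymptotic $A(n)/(nB(n))\to 0$ from the
distributional hypothesis, cleanly handled by the evaluation at
$s_n=1/n$ and the continuity point $0$ of $R$.
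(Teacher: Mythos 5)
Your proposal is correct and follows the same route as the paper: both reduce the statement, via Zweim\"uller's extension of Eagleson's principle (\cite[Theorem 1]{Zweimueller07}), to checking asymptotic $f$-invariance $d_{\tau,\infty}(R_n\circ f,R_n)\overset{\mu}{\longrightarrow}0$, and both dispose of the $\mathcal{M}_1$ case by noting $d_{\mathcal{M}_1,\infty}\le d_{\mathcal{J}_1,\infty}$. Where you diverge is in how the asymptotic invariance is discharged. The paper simply cites the proof of \cite[Corollary 3]{Zweimueller07} and asserts that $B(n)\to\infty$ alone suffices; you instead carry the computation out. Your identity $R_n(fx)(t)=R_n(x)(t+1/n)+c_n(x)$ with $c_n(x)=(A(n)/n-\phi(x))/B(n)$ is correct, and your observation that the deterministic drift piece $A(n)/(nB(n))$ does not follow from $B(n)\to\infty$ alone but must be extracted from the distributional hypothesis~(\ref{Eq_abcdefg}) (via evaluation at $s_n=1/n$ and continuity of $R$ at $0$) is a genuine and careful point. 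With the \emph{constant} centering $A(n)$ of \cite{Zweimueller07} this term cancels exactly, but with the \emph{linear} centering $tA(n)$ adopted here (cf.\ the paper's own remark on the centering discrepancy) the drift term does appear once one aligns jumps by a $1/n$ time shift, so your use of~(\ref{Eq_abcdefg}) is warranted rather than superfluous. The only place where you gloss is the bound on the $\mathcal{J}_1/\mathcal{M}_1$ cost of the time translation, which requires a genuine homeomorphism of the bounded interval rather than the literal shift $t\mapsto t+1/n$; this is a routine boundary adjustment (and is in any case handled in the cited reference), so it does not constitute a gap.
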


\begin{proof}
This is based on ideas in~\cite{Eagleson76}.
According to Zweim\"uller~\cite[Theorem 1]{Zweimueller07}, it suffices to check
that $d_{\tau,\infty}(R_{n}\circ f,R_{n})\overset{\mu}{\longrightarrow}0$. The
proof of \cite[Corollary 3]{Zweimueller07} shows that
$B(n)\rightarrow\infty$ alone (that is, even without (\ref{Eq_abcdefg}))
implies $d_{\mathcal{J}_{1},\infty}(R_{n}\circ f,R_{n})\overset{\mu
}{\longrightarrow}0$. Since $d_{\mathcal{M}_{1},\infty}\leq d_{\mathcal{J}%
_{1},\infty}$ (see \cite[Theorem 12.3.2]{Whitt}), the case $\tau=\mathcal{M}_{1}$
then is a trivial consequence.
\end{proof}

\begin{remark}
There is a systematic typographical error in~\cite{Zweimueller07}
 in that the factor $t$ in the centering process
$tA(n)/B(n)$ is missing, but the arguments there work, without any change, for
the correct centering.
\end{remark}%

\section{Proof of Theorem \ref{T_InducingWIP}.\label{Sec_PfofThm1}}
\label{sec-proof}

In this section, we give the proof of Theorem~\ref{T_InducingWIP}
and also Proposition~\ref{P_PointwiseMonotonicity}.
Throughout, we assume the setting of Theorem~\ref{T_InducingWIP}.
In particular, we suppose that $f$ is
an ergodic m.p.t.\ on the probability space $(X,\mu)$, and that
$Y\subset X$ is a subset of positive measure with return time $r$ and first
return map $F$.

\subsection{Decomposing the processes.}
When $Y$ is chosen
appropriately, many features of $f$ are reflected in the behaviour of the
ergodic sums $r_{n}=\sum_{j=0}^{n-1}r\circ F^{j}$, i.e.\ the times at which
orbits return to $Y$. These are intimately related to the
\emph{occupation times} or \emph{lap numbers}
\[
N_k:=\sum_{\ell=1}^k1_{Y}\circ f^{\ell}
=\max\{n\geq0:r_{n}\leq k\}\le k,\enspace k\ge0.
\]
The visits to $Y$ counted by the $N_k$ separate
the consecutive \emph{excursions from} $Y$, that is, the intervals
$\{r_{j},\ldots,r_{j+1}-1\}$, $j\geq0$. Decomposing the $f$-orbit of $y$ into
these excursions, we can represent the ergodic sums of $\phi$ as
\begin{align*}
\phi_k=\Phi_{N_k}+R_k\quad\text{on }Y
\end{align*}
with remainder term $R_k=\sum_{\ell=r_{N_k}}^{k-1}\phi\circ f^{\ell}%
=\phi_{k-r_{N_k}}\circ F^{N_k}$ encoding the contribution of the
incomplete last excursion (if any). Next, decompose the rescaled processes
accordingly, writing
\begin{align*}
W_k(s)=U_k(s)+V_k(s)\text{,}
\end{align*}
with $U_k(s):=B(k)^{-1}\Phi_{N_{\left\lfloor sk\right\rfloor }}$, and
$V_k(s):=B(k)^{-1}R_{_{\left\lfloor sk\right\rfloor }}$. On the time scale
of $U_{n}$, the excursions correspond to the intervals $[t_{n,j},t_{n,j+1})$,
$j\geq0$, where $t_{n,j}:Y\rightarrow\lbrack0,\infty)$\ is given by
$t_{n,j}:=r_{j}/n$. Note that the interval containing a given point $t>0$ is
that with $j=N_{\left\lfloor tn\right\rfloor }$. Hence
\begin{equation}
t\in\lbrack t_{n,N_{\left\lfloor tn\right\rfloor }},t_{n,N_{\left\lfloor
tn\right\rfloor }+1})\text{\quad for }t>0\text{ and }n\geq1\text{.}%
\label{Eq_ReturnTimesOnScaleOfU}%
\end{equation}

\subsection{Some almost sure results}

In this subsection, we record some consequences of the ergodic theorem which we will use
below. But first an elementary observation, the proof of which we omit.

\begin{lemma}
\label{L_HistoryOfSequence}Let $(c_{n})_{n\geq0}$ be a sequence in
$\mathbb{R}$ such that $n^{-1}c_{n}\rightarrow c\in\mathbb{R}$. Define a
sequence of functions $C_{n}:[0,\infty)\rightarrow\mathbb{R}$ by letting
$C_{n}(t):=n^{-1}c_{\left\lfloor tn\right\rfloor }-tc$. Then, for any $T>0$,
$(C_{n})_{n\geq1}$ converges to $0$ uniformly on $[0,T]$.
\end{lemma}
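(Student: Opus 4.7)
My plan is to split $C_n(t)$ into a ``rescaled deviation'' piece and a ``floor function'' piece, both of which can be controlled uniformly in $t\in[0,T]$. Specifically, write
\[
C_n(t)=\frac{c_{\lfloor tn\rfloor}-\lfloor tn\rfloor c}{n}+c\cdot\frac{\lfloor tn\rfloor-tn}{n}.
\]
The second summand is immediately bounded in absolute value by $|c|/n$, uniformly in $t$, and so vanishes as $n\to\infty$.

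For the first summand, I would exploit the hypothesis $c_k/k\to c$ in the form: for any $\varepsilon>0$ there exists $N=N(\varepsilon)$ such that $|c_k-kc|\le\varepsilon k$ for all $k\ge N$, while $M:=\max_{0\le k<N}|c_k-kc|$ is a finite constant. Writing $k:=\lfloor tn\rfloor$, split the sup over $t\in[0,T]$ according to whether $k<N$ or $k\ge N$. In the first regime, the contribution is at most $M/n$; in the second, it is at most $\varepsilon k/n\le\varepsilon T$. Hence
\[
\sup_{t\in[0,T]}|C_n(t)|\le \frac{M+|c|}{n}+\varepsilon T,
\]
so $\limsup_n\sup_{t\in[0,T]}|C_n(t)|\le\varepsilon T$ for every $\varepsilon>0$, which gives the claimed uniform convergence to $0$.

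This is really a routine bookkeeping argument, so there is no substantive obstacle; the only point requiring a touch of care is separating the finitely many small-$k$ indices (where $c_k/k$ may be far from $c$) from the tail indices, and observing that the floor correction is automatically $O(1/n)$ uniformly on compact time intervals.
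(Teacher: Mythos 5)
The paper deliberately omits the proof of this lemma, calling it ``an elementary observation,'' so there is no argument in the paper to compare against. Your proof is correct: the decomposition of $C_n(t)$ into the rescaled deviation $n^{-1}(c_{\lfloor tn\rfloor}-\lfloor tn\rfloor c)$ and the floor-error term $c\,n^{-1}(\lfloor tn\rfloor-tn)$ is exact, the floor term is $O(1/n)$ uniformly, and the split of the index range $\lfloor tn\rfloor$ at a threshold $N=N(\varepsilon)$ correctly yields $\sup_{[0,T]}|C_n|\le (M+|c|)/n+\varepsilon T$, hence $\limsup_n\sup_{[0,T]}|C_n|\le\varepsilon T$ for all $\varepsilon>0$. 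This is exactly the routine argument the authors had in mind when they declared the lemma elementary.
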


For the occupation times of $Y$, we then obtain:

\begin{lemma}
[\textbf{Strong law of large numbers for occupation times}]\label{L_LapNumberEtc} The occupation times $N_k$ satisfy
\newline\textbf{a)}
$k^{-1}N_k\longrightarrow\mu(Y)\text{\quad a.e. on }X\text{\quad as
}k\rightarrow\infty$.
\newline\textbf{b)} Moreover, for any $T>0$,
\begin{align*}
\sup\nolimits_{t\in\lbrack0,T]}\left\vert k^{-1}N_{\left\lfloor
tk\right\rfloor }-t\mu(Y)\right\vert \longrightarrow0\text{\quad a.e. on
}X\text{\quad as }k\rightarrow\infty\text{.}%
\end{align*}

\end{lemma}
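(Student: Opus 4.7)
The plan is to deduce both parts directly from Birkhoff's pointwise ergodic theorem combined with the elementary interpolation statement of Lemma~\ref{L_HistoryOfSequence}. For part (a), I would first observe that $N_k = \sum_{\ell=1}^{k} 1_Y \circ f^{\ell}$ differs from the standard Birkhoff sum $S_k 1_Y := \sum_{\ell=0}^{k-1} 1_Y \circ f^{\ell}$ only by the bounded quantity $1_Y \circ f^k - 1_Y$, which is $o(k)$ pointwise. Since $(X,\mu,f)$ is an ergodic m.p.t.\ on a probability space, Birkhoff's theorem applied to the indicator $1_Y \in L^1(\mu)$ gives $k^{-1} S_k 1_Y \to \int_X 1_Y\,d\mu = \mu(Y)$ a.e.\ on $X$, and dividing out the negligible correction then yields $k^{-1} N_k \to \mu(Y)$ a.e., which is exactly (a).

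For part (b), I would fix an arbitrary $y \in X$ in the full-measure set on which the convergence from (a) holds, and apply Lemma~\ref{L_HistoryOfSequence} to the deterministic sequence $c_k := N_k(y)$ with limit $c := \mu(Y)$. The lemma then provides the uniform convergence $C_k(t) = k^{-1} N_{\lfloor tk \rfloor}(y) - t\mu(Y) \to 0$ on $[0,T]$, which is precisely the claimed statement. There is no real obstacle here: the only small point of care is the shift in summation indices between $N_k$ and the Birkhoff average, and the fact that Lemma~\ref{L_HistoryOfSequence} requires only pointwise (not uniform) convergence of $c_k/k$, so part (a) is exactly the input needed for part (b).
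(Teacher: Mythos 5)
Your proof is correct and follows essentially the same route as the paper: part (a) is Birkhoff's ergodic theorem applied to $1_Y$ (the index shift you note is a harmless detail), and part (b) is an immediate application of Lemma~\ref{L_HistoryOfSequence} at a.e.\ point.
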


\begin{proof}
The first statement is immediate from the ergodic theorem. The second then
follows by the preceding lemma.
\end{proof}%

\begin{lemma}
\label{L_ReturnTimeControl}
For any $T>0$,
$\,\lim_{n\to\infty}n^{-1}\max\nolimits_{0\leq j\leq\left\lfloor Tn\right\rfloor +1}(r\circ
F^{j})=0\text{\enspace a.e.\ on }Y$.
\end{lemma}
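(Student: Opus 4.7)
The plan is to recognise this as a standard maximal consequence of the fact that the return time $r$ is integrable on $(Y,\mu_Y)$, combined with a Borel--Cantelli argument along the $F$-orbit.

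First I would recall Kac's formula: $\int_Y r\,d\mu_Y = \mu(Y)^{-1}<\infty$, so $r\in L^1(Y,\mu_Y)$. Then I would use the classical observation that for any non-negative $g\in L^1$ on a probability space $(Y,\mu_Y)$ and any m.p.t.\ $F$, one has $j^{-1}(g\circ F^j)\to0$ a.e.\ on $Y$. The quick proof: for fixed $\varepsilon>0$, measure-preservation gives
\begin{equation*}
\sum_{j\ge1}\mu_Y\bigl(\{r\circ F^j>\varepsilon j\}\bigr)
=\sum_{j\ge1}\mu_Y\bigl(\{r>\varepsilon j\}\bigr)
\le \varepsilon^{-1}\int_Y r\,d\mu_Y<\infty,
\end{equation*}
so by Borel--Cantelli, for $\mu_Y$-a.e.\ $y\in Y$ there exists $N=N(y,\varepsilon)$ such that $r(F^j y)\le\varepsilon j$ for all $j\ge N$.

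Second, I would pass from this pointwise control to the uniform bound on the maximum by a simple split. For such a $y$ and for $n$ large,
\begin{equation*}
\max_{0\le j\le \lfloor Tn\rfloor+1} r(F^j y)
\;\le\; \max\Bigl(\max_{0\le j<N} r(F^j y),\;\varepsilon(\lfloor Tn\rfloor+1)\Bigr).
\end{equation*}
Dividing by $n$, the first term tends to $0$ as $n\to\infty$ (it is a fixed finite quantity) while the second is at most $2\varepsilon T$ for large $n$. Hence
\begin{equation*}
\limsup_{n\to\infty} n^{-1}\max_{0\le j\le \lfloor Tn\rfloor+1} r\circ F^j(y)\le 2\varepsilon T,
\end{equation*}
and since $\varepsilon>0$ is arbitrary, the limsup is $0$, which gives the claim.

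There is no real obstacle here; the only thing to be careful about is the usual replacement of the orbit sum $r\circ F^j$ by the distribution of $r$ itself, which is valid because $F$ preserves $\mu_Y$, and the fact that the exceptional null set depends on the countable collection $\varepsilon=1/k$, $k\in\mathbb N$, but a countable union of null sets is still null.
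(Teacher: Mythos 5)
Your proof is correct. It differs from the paper's argument in how it obtains the basic pointwise fact $j^{-1}(r\circ F^j)\to 0$ a.e.: the paper applies Birkhoff's ergodic theorem to the integrable return time $r$, getting $n^{-1}\sum_{j<n} r\circ F^j \to \mu(Y)^{-1}$ and hence $n^{-1}(r\circ F^n)\to 0$ a.e., and then invokes its Lemma~\ref{L_HistoryOfSequence} to upgrade this to uniform control on $[0,T]$. You instead use Borel--Cantelli: measure preservation gives $\sum_j \mu_Y(r\circ F^j>\varepsilon j)=\sum_j\mu_Y(r>\varepsilon j)\le\varepsilon^{-1}\int_Y r\,d\mu_Y<\infty$, so $r\circ F^j\le\varepsilon j$ eventually a.e., and you then pass to the maximum by hand, splitting off the finitely many initial terms. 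Both routes are valid and rest on $r\in L^1(\mu_Y)$ (Kac) plus measure preservation; your version is slightly more elementary in that it avoids the ergodic theorem entirely, at the cost of the explicit $\varepsilon=1/k$ and countable-union-of-null-sets bookkeeping, whereas the paper leverages an auxiliary lemma it has already stated (and reuses elsewhere) to get the uniform statement in one stroke. One small cosmetic point: after dividing by $n$ your second term is eventually at most $\varepsilon T + \varepsilon/n$, so the $\limsup$ bound is $\varepsilon T$ rather than $2\varepsilon T$, though this of course makes no difference to the conclusion.
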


\begin{proof}
Applying the ergodic theorem to $F$ and the integrable function $r$, we get
$n^{-1}\sum_{j=0}^{n-1}r\circ F^{j}\rightarrow\mu(Y)^{-1}$, and hence also
$n^{-1}(r\circ F^{n})\rightarrow0$ a.e. on $Y$. The result follows from Lemma
\ref{L_HistoryOfSequence}.
\end{proof}%

\vspace{0.3cm}%
\noindent
\textbf{Pointwise control of monotonicity behaviour.} We conclude this subsection
by establishing Proposition \ref{P_PointwiseMonotonicity}.

\vspace{0.3cm}%

\begin{proof}
[\textbf{Proof of Proposition \ref{P_PointwiseMonotonicity}}]
We may suppose without loss that the sequence $B(n)$ is nondecreasing.   Since this sequence is regularly varying,
$B(\widehat{\delta}n)/B(n)\rightarrow\widehat{\delta}^{\gamma}$ for
all $\widehat{\delta}>0$.
Hence for
$\delta>0$ fixed, there are $\widehat{\delta}>0$ and $\widehat{n}\geq1$ s.t.
$\eta\,B(h)/B(n)<\delta$\ whenever $n\geq\widehat{n}$ and $h\leq
\widehat{\delta}n$.

As a consequence of Lemma \ref{L_ReturnTimeControl}, there is some
$\widetilde{n}\geq1$ such that $Y_{n}:=\{n^{-1}\max\nolimits_{0\leq j\leq
n}(r\circ F^{j})<\widehat{\delta}\}$\ satisfies $\mu_{Y}(Y_{n}^{c}%
)<\varepsilon$ for $n\geq\widetilde{n}$. In view of
(\ref{Eq_PointwiseMonotonicityCtrl}) we then see (using monotonicity of $B$
again) that
\begin{align*}
\frac{1}{B(n)}\left(  \max_{0\leq j\leq n}\Phi^{\ast}\circ F^{j}\right)    &
\leq\frac{1}{B(n)}\left(  \max_{0\leq j\leq n}\eta\,B(r\circ F^{j})\right)
\\
& \leq\frac{\eta\,B(\max\nolimits_{0\leq j\leq n}(r\circ F^{j}))}%
{B(n)}\,<\delta\text{\quad on }Y_{n}\text{\quad for }n\geq\widehat{n}\text{,}%
\end{align*}
which proves (\ref{Eq_EssMonotonicity}).
\end{proof}%

\subsection{Convergence of $(U_{n})$.}
As a first step towards Theorem
\ref{T_InducingWIP}, we prove that switching from $\Phi_{\left\lfloor
tn\right\rfloor }$ to $\Phi_{N_{\left\lfloor sk\right\rfloor }}$ preserves
convergence in the Skorohod space.

\begin{lemma}
[\textbf{Convergence of }$(U_{n})$]\label{L_UnCtyAtT}Under the assumptions of
Theorem \ref{T_InducingWIP},
\begin{align*}
\left(  U_{n}(s)\right)  _{s\geq0}\overset{\mathcal{\mu}_{Y}}{\Longrightarrow
}\left(  W(s\,\mu(Y))\right)  _{s\geq0}\text{\quad in }(\mathcal{D}%
[0,\infty),\mathcal{M}_{1}).
\end{align*}
\end{lemma}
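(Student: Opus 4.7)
The plan is to represent $U_n$ as a time change of $P_n$ and then invoke the $\mathcal{M}_1$-composition theorem of Whitt. First I observe the pointwise identity
\[
U_n(s) = P_n(\lambda_n(s)),\qquad \lambda_n(s) := N_{\lfloor sn\rfloor}/n,
\]
which holds because $\lfloor n\lambda_n(s)\rfloor = N_{\lfloor sn\rfloor}$ is already an integer. By Lemma~\ref{L_LapNumberEtc}(b), on each compact $[0,T]$ we have $\lambda_n\to\lambda$ uniformly, a.s.\ on $X$ and hence $\mu_Y$-a.s.\ on $Y$, where $\lambda(s):=s\,\mu(Y)$ is deterministic, linear, continuous, and strictly increasing. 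Each $\lambda_n$ is moreover nondecreasing (it is built from a counting process), which will matter for the composition step.

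Next I combine this almost sure convergence of $\lambda_n$ to the deterministic limit $\lambda$ with the weak convergence $P_n\overset{\mathcal{L}(\mu_Y)}{\Longrightarrow}W$ supplied by~\eqref{Eq_WIPinduced}. Almost sure (hence in-probability) convergence to a deterministic element of a Polish space, joined with weak convergence in that space, is a standard Slutsky-type situation and yields the joint distributional convergence
\[
(P_n,\lambda_n) \overset{\mu_Y}{\Longrightarrow} (W,\lambda)
\]
in the product topology of $(\mathcal{D}[0,\infty),\mathcal{M}_1)$ with $(\mathcal{D}[0,\infty),\text{unif})$ (the uniform limit of $\lambda_n$ is permissible since $\lambda$ is continuous).

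Finally I will invoke Whitt's $\mathcal{M}_1$-composition theorem (Theorem 13.2.4 of~\cite{Whitt}): the map $(x,y)\mapsto x\circ y$, acting on cadlag $x$ and nondecreasing cadlag $y$, is $\mathcal{M}_1$-continuous at any pair $(w,y)$ for which $y$ is continuous and strictly increasing. Both properties hold for $\lambda$ since $\mu(Y)>0$, so the continuous mapping theorem applied to the joint convergence above produces
\[
U_n \;=\; P_n\circ\lambda_n \;\overset{\mu_Y}{\Longrightarrow}\; W\circ\lambda \;=\; W(\cdot\,\mu(Y))
\]
in $(\mathcal{D}[0,\infty),\mathcal{M}_1)$, which is the assertion of the lemma.

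The main obstacle is identifying and applying the correct $\mathcal{M}_1$-composition theorem: composition in $\mathcal{M}_1$ is markedly more delicate than in $\mathcal{J}_1$ when the outer function has jumps, and continuity together with strict monotonicity of the limiting time change are both essential. Here these properties are immediate from the linear form of $\lambda$, so verification is routine, but this is the step at which the whole approach could otherwise fail. A minor technical remark is that convergence in $(\mathcal{D}[0,\infty),\mathcal{M}_1)$ is tested via $[0,T]$ for every continuity point $T$ of the limit, which is harmless since $W\circ\lambda$ has at most countably many discontinuities.
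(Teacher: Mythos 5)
Your proposal is correct and follows essentially the same route as the paper: the same time-change identity $U_n = P_n\circ u_n$ with $u_n(s)=n^{-1}N_{\lfloor sn\rfloor}$, the same use of Lemma~\ref{L_LapNumberEtc}(b) for almost sure uniform convergence of the time change to the deterministic $s\mapsto s\mu(Y)$, the same Slutsky-type step to joint distributional convergence, and the same appeal to Whitt's $\mathcal{M}_1$-composition continuity theorem (the paper cites Theorem~13.2.3 of~\cite{Whitt}, and uses $\mathcal{M}_1$ rather than the uniform topology on the time-change coordinate, but this is immaterial since the limit is continuous) together with the mapping theorem.
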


\begin{proof}
For $n\geq1$ and $s\in \lbrack0,\infty)$, we
let $u_{n}(s):=n^{-1}N_{\left\lfloor sn\right\rfloor }
$. Since $\left\lfloor u_{n}(s)\,n\right\rfloor =N_{\left\lfloor
sn\right\rfloor }$, we have
\begin{equation}
U_{n}(s)=P_{n}(u_{n}(s))\quad\text{on }Y\text{ for }n\geq1\text{ and }%
s\geq0\text{.}\label{Eq_TimeChange}%
\end{equation}
We regard $U_{n}$, $P_{n}$, $W$, and $u_{n}$ as random elements of
$(\mathcal{D},\mathcal{M}_{1})=(\mathcal{D}[0,\infty),\mathcal{M}_{1})$. Note
that $u_{n}\in\mathcal{D}_{\uparrow}:=\{g\in\mathcal{D}:$ $g(0)\geq0$ and
$g$\ \ non-decreasing$\}$. Let $u$ denote the constant random element of
$\mathcal{D}$ given by $u(s)(y):=s\mu(Y)$, $s\geq0$.

Recalling Lemma \ref{L_LapNumberEtc} b), we see that for $\mu_{Y}$-a.e. $y\in
Y$ we have $u_{n}(.)(y)\rightarrow u(.)(y)$ uniformly on compact subsets of
$[0,\infty)$. Hence, $u_{n}\rightarrow u$ in $(\mathcal{D},\mathcal{M}_{1})$
holds $\mu_{Y}$-a.e. In particular,
\[
u_{n}\overset{\mu_{Y}}{\Longrightarrow}u\text{\quad in }(\mathcal{D}%
,\mathcal{M}_{1})\text{.}%
\]
By assumption (\ref{Eq_WIPinduced}) we also have $P_{n}\overset{\mathcal{L}%
(\mu_{Y})}{\Longrightarrow}W$ in $(\mathcal{D},\mathcal{M}_{1})$. But then we
automatically get
\begin{equation}
(P_{n},u_{n})\overset{\mu_{Y}}{\Longrightarrow}(W,u)\text{\quad in
}(\mathcal{D},\mathcal{M}_{1})^{2}\text{,}\label{Eq_JointJ1CgePnun}%
\end{equation}
since the limit $u$ of the second component is deterministic.

The composition map $(\mathcal{D},\mathcal{M}_{1})\times(\mathcal{D}%
_{\uparrow},\mathcal{M}_{1})\rightarrow(\mathcal{D},\mathcal{M}_{1})$,
$(g,v)\mapsto g\circ v$, is continuous at every pair $(g,v)$ with
$v\in\mathcal{C}_{\uparrow\uparrow}:=\{g\in\mathcal{D}:$ $g(0)\geq0$ and $g$
strictly increasing and continuous$\}$, cf.\ \cite[Theorem 13.2.3]{Whitt}. As the
limit $(W,u)$ in (\ref{Eq_JointJ1CgePnun}) satisfies $\Pr((W,u)\in
\mathcal{D}\times\mathcal{C}_{\uparrow\uparrow})=1$, the standard mapping
theorem for distributional convergence (cf.\ \cite[Theorem 3.4.3]{Whitt}) applies
to $(P_{n},u_{n})$, showing that
\begin{align*}
P_{n}\circ u_{n}\overset{\mu_{Y}}{\Longrightarrow}W\circ u\text{\quad in
}(\mathcal{D},\mathcal{M}_{1})\text{.}
\end{align*}
In view of (\ref{Eq_TimeChange}), this is what was to be proved.%
\end{proof}%

\subsection{Control of excursions}

Passing from convergence of $(U_n)$ to convergence of $(W_n)$
requires a little preparation.

\begin{lemma} \label{lem-M1b}
(i) Let $g,g' \in \mathcal{D}[0,T] $ and
$0=T_0<\ldots<T_m=T$. Then
$$d_{\mathcal{M}_1,T}(g,g') \leq
\max_{1\leq j \leq m}  d_{\mathcal{M}_1,[T_{j-1},T_j]}
(g \vert _{[T_{j-1},T_j]},g'  \vert _{[T_{j-1},T_j]}).$$
(ii) Let $g_j \in \mathcal{D}[T_{j-1},T_j]$ and
$\bar{g}_j:=1_{[T_{j-1},T_j)}\,g_j(T_{j-1})+1_{\{T_j\}}\,g_j(T_j)$. Then
$$ d_{\mathcal{M}_1,[T_{j-1},T_j]} (g_j,\bar{g}_j) \leq
2 g_j^{\ast} + ( T_{j} -T_{j-1} ) ,$$
where
$ g_j^{\ast} :=
(\sup_{T_{j-1} \leq s \leq t \leq T_{j}} (g_j(s)-g_j(t))) \wedge
(\sup_{T_{j-1} \leq s \leq t \leq T_{j}} (g_j(t)-g_j(s))).$

\end{lemma}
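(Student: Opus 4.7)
For \textbf{part (i)}, the plan is a concatenation argument. Given $\varepsilon>0$, for each $j=1,\ldots,m$ I select parametrizations $G_i^{(j)} = (\lambda_i^{(j)}, \gamma_i^{(j)}) \in \Lambda^*(g_i|_{[T_{j-1}, T_j]})$ (writing $g_1 = g$, $g_2 = g'$) realizing the corresponding subinterval $\mathcal{M}_1$-distance to within $\varepsilon$. Using the standard convention that each such parametrization sends the left endpoint of its parameter interval to $(T_{j-1}, g_i(T_{j-1}))$ and the right endpoint to $(T_j, g_i(T_j))$, I concatenate across the grid $T_0, T_1, \ldots, T_m$ to produce $(\lambda_i, \gamma_i) \in \Lambda^*(g_i)$ on $[0,T]$; continuity at each breakpoint is automatic, and $\|\lambda_1 - \lambda_2\| \vee \|\gamma_1 - \gamma_2\|$ on $[0,T]$ equals the maximum over $j$ of the corresponding subinterval suprema. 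Passing $\varepsilon \to 0$ then yields (i).

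For \textbf{part (ii)}, my plan is a triangle-inequality argument via an intermediate monotonic or constant approximation $\widehat{g}_j$ of $g_j$. By symmetry of the definition of $g_j^{\ast}$, I assume the ``predominantly increasing'' case $g_j^{\ast} = \sup_{s \leq t}(g_j(s) - g_j(t))$, so that $g_j(t) \geq g_j(s) - g_j^{\ast}$ for $T_{j-1} \leq s \leq t \leq T_j$; I will use the standard bound $d_{\mathcal{M}_1} \leq \|\cdot\|$ on cadlag functions.

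If $g_j(T_{j-1}) \leq g_j(T_j)$, I take $\widehat{g}_j(t) := \bigl(\sup_{T_{j-1} \leq u \leq t} g_j(u)\bigr) \wedge g_j(T_j)$, a non-decreasing cadlag function from $g_j(T_{j-1})$ to $g_j(T_j)$; a short case-check using the predominantly increasing bound gives $\|g_j - \widehat{g}_j\| \leq g_j^{\ast}$. The comparison $d_{\mathcal{M}_1}(\widehat{g}_j, \bar{g}_j) \leq T_j - T_{j-1}$ will follow from parametrizations coordinated by the common $y$-value, with $\gamma_1 \equiv \gamma_2$ and both $\lambda$-components valued in $[T_{j-1}, T_j]$. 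Otherwise, if $g_j(T_{j-1}) > g_j(T_j)$ (necessarily with $g_j(T_{j-1}) - g_j(T_j) \leq g_j^{\ast}$), I instead take $\widehat{g}_j \equiv g_j(T_{j-1})$; the predominantly increasing bound confines $g_j$ to $[g_j(T_{j-1}) - g_j^{\ast}, g_j(T_{j-1}) + g_j^{\ast}]$, so $\|g_j - \widehat{g}_j\| \leq g_j^{\ast}$, while $\widehat{g}_j$ and $\bar{g}_j$ agree on $[T_{j-1}, T_j)$ and differ at $T_j$ by at most $g_j^{\ast}$, whence $d_{\mathcal{M}_1}(\widehat{g}_j, \bar{g}_j) \leq g_j^{\ast}$. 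In both cases the triangle inequality yields $d_{\mathcal{M}_1}(g_j, \bar{g}_j) \leq 2 g_j^{\ast} + (T_j - T_{j-1})$.

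The main obstacle will be the explicit construction of coordinated parametrizations giving $d_{\mathcal{M}_1}(\widehat{g}_j, \bar{g}_j) \leq T_j - T_{j-1}$ between two non-decreasing cadlag functions sharing endpoint values. Concretely, I will split the parameter interval $[T_{j-1}, T_j]$ into phases aligned with the L-shape of $\Gamma(\bar{g}_j)$ --- a horizontal leg followed by the vertical jump at $T_j$ --- with a matching split for the parametrization of $\widehat{g}_j$: a leading pause at $g_j(T_{j-1})$ covering $\bar{g}_j$'s horizontal leg, a $y$-coordinated sweep of $\widehat{g}_j$ up to $g_j(T_j)$ covering $\bar{g}_j$'s vertical leg, and a trailing pause at $g_j(T_j)$. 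Jumps of $\widehat{g}_j$ will require additional local pauses in $\lambda_1$ so that $\gamma_1$ traverses the corresponding vertical segments of $\Gamma(\widehat{g}_j)$.
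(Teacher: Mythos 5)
Your proposal is correct and follows essentially the same route as the paper's proof: part (i) by concatenating near-optimal parametrizations over the grid, and part (ii) by reducing to the predominantly increasing case, inserting a non-decreasing intermediate function within uniform distance $g_j^{\ast}$ of $g_j$, and bounding the $\mathcal{M}_1$-distance from that monotone intermediate to the L-shaped $\bar g_j$ by $T_j-T_{j-1}$ via coordinated parametrizations (pauses synchronized with the horizontal and vertical legs). The only cosmetic difference is that the paper uses the pure running maximum $g_j^{\uparrow}$ as intermediate and then needs a third step from $\bar g_j^{\uparrow}$ to $\bar g_j$, whereas you cap the running maximum by $g_j(T_j)$ — adding a separate constant case when $g_j(T_{j-1})>g_j(T_j)$ — so that the intermediate already shares both endpoint values with $\bar g_j$; both bookkeepings deliver the stated bound.
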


\begin{proof}
The first assertion is obvious. To validate the second, assume
without loss that $j=1$ and that $g_1$ is predominantly increasing in that
$g_1^{\ast}= \sup_{T_0 \leq s \leq t \leq T_1} (g_1(s)-g_1(t))$.
In this case, $g_1^{\ast}= \sup_{T_0 \leq t \leq T_1}
(g_1^{\uparrow}(t)-g_1(t))$ for the nondecreasing function
$g_1^{\uparrow}(t):= \sup_{T_0 \leq s \leq t} g_1(s)$.
Therefore,
$$ d_{\mathcal{M}_1,[T_0,T_1]} (g_1,g_1^{\uparrow}) \leq
   \| g_1 - g_1^{\uparrow}  \|  =  g_1^{\ast}. $$
Letting $\bar{g_1}^{\uparrow}:= 1_{[T_0,T_1)}\,g_1^{\uparrow}(T_0)
+1_{\{T_1\}}\,g_1^{\uparrow}(T_1)$, it is clear that
$$  d_{\mathcal{M}_1,[T_0,T_1]} (\bar{g}_1^{\uparrow},\bar{g}_1) \leq
    \| \bar{g}_1^{\uparrow} - \bar{g}_1  \|  =
    \vert  g_1^{\uparrow}(T_1) - g_1(T_1) \vert
    \leq g_1^{\ast}.  $$
Finally, we check that
$$ d_{\mathcal{M}_1,[T_0,T_1]} (g_1^{\uparrow},\bar{g}_1^{\uparrow})
   \leq   T_1 -T_0. $$

To this end, we refer to Figure~\ref{fig-M1} where
$\Gamma_1=\Gamma(g_1^{\uparrow})$ and $\Gamma_2=\Gamma(\bar{g}_1^{\uparrow})$
represent the completed graphs of $g_1^{\uparrow}$ and $\bar{g}_1^{\uparrow}$ respectively.
Here $\Gamma_2$ consists of one horizontal line segment
followed by one vertical segment.
The picture of $\Gamma_1$ is schematic, it may also
contain horizontal and vertical line segments.

Choose $C$ on the graph of $\Gamma_1$ that is equidistant from
$AD$ and $DB$ and let $E$ be the point on $DB$ that is the same height as $C$.
Choose parametrizations $G_i=(\lambda_i,\gamma_i)$ of $\Gamma_i$, $i=1,2$, satisfying
\begin{itemize}
\item[(i)]  $G_1(0)=G_2(0)=A,\quad
G_1(1)=G_2(1)=B$,
\item[(ii)] $G_1(\frac12)=C, \quad
G_2(\frac12)=E$,
\item[(iii)] $\gamma_1(t)=\gamma_2(t)$ for all $t\in[\frac12,1]$.
\end{itemize}
Automatically $\|\lambda_1-\lambda_2\|\le |AD| =T_1 -T_0$ and by construction
$\|\gamma_1-\gamma_2\|\le |DE|\le |AD|$, as required.
\end{proof}\\

As a consequence, we obtain:

\begin{lemma} \label{lem-M1}
$d_{\mathcal{M}_1,T}(W_n,U_n)\le \max_{0\le j\le \lfloor Tn \rfloor+1} (n^{-1}r + 2 B(n)^{-1}\Phi^\ast )\circ F^j$.
\end{lemma}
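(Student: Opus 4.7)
The strategy is to partition $[0, T]$ according to the return times $t_{n,j} = r_j/n$ and apply Lemma~\ref{lem-M1b} piece by piece. Set $N := N_{\lfloor Tn\rfloor}$ and consider the partition $0 = t_{n,0} < t_{n,1} < \cdots < t_{n,N} \le T$, yielding $N$ complete excursion subintervals $[t_{n,j-1}, t_{n,j}]$ for $j = 1, \ldots, N$ together with one incomplete final subinterval $[t_{n,N}, T]$. By Lemma~\ref{lem-M1b}(i), it suffices to bound $d_{\mathcal{M}_1}$ on each subinterval separately and take the maximum.

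On each complete excursion subinterval $[t_{n,j-1}, t_{n,j}]$, the step process $U_n$ is constant equal to $\Phi_{j-1}/B(n) = W_n(t_{n,j-1})$ on $[t_{n,j-1}, t_{n,j})$ and takes the value $\Phi_j/B(n) = W_n(t_{n,j})$ at $t_{n,j}$. Hence $U_n$ restricted here coincides \emph{exactly} with the step function $\bar{g}_j$ produced by Lemma~\ref{lem-M1b}(ii) applied to $g_j := W_n|_{[t_{n,j-1}, t_{n,j}]}$. Differences $W_n(s) - W_n(t)$ for $t_{n,j-1} \le s \le t \le t_{n,j}$ correspond exactly to $B(n)^{-1}(\phi_{\ell'} - \phi_\ell) \circ F^{j-1}$ with $0 \le \ell' \le \ell \le r \circ F^{j-1}$, so $g_j^\ast = B(n)^{-1}\Phi^\ast \circ F^{j-1}$ directly from the definition of $\Phi^\ast$. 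Combined with $t_{n,j} - t_{n,j-1} = r \circ F^{j-1}/n$, Lemma~\ref{lem-M1b}(ii) delivers the per-piece bound $2 B(n)^{-1}\Phi^\ast \circ F^{j-1} + n^{-1} r \circ F^{j-1}$.

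The incomplete subinterval $[t_{n,N}, T]$ is more delicate because $U_n(T) = \Phi_N/B(n) \neq W_n(T)$ in general, so the clean identification $U_n \equiv \bar{g}$ breaks at the right endpoint. The idea is to extend the analysis to the complete excursion $[t_{n,N}, t_{n,N+1}]$: on this enlarged interval, $U_n$ again coincides exactly with the $\bar{g}$ of Lemma~\ref{lem-M1b}(ii) applied to the extended $W_n$, and the bound $2 B(n)^{-1}\Phi^\ast \circ F^N + n^{-1} r \circ F^N$ holds there by the same reasoning. Transferring this back to $[t_{n,N}, T]$ will rely on the (standard, but worth recording) fact that the $\mathcal{M}_1$ distance is non-increasing under restriction to a subinterval, which follows by suitably truncating and rescaling optimal parametrizations of the completed graphs.

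Assembling via Lemma~\ref{lem-M1b}(i) yields $d_{\mathcal{M}_1, T}(W_n, U_n) \le \max_{0 \le k \le N}(n^{-1} r + 2 B(n)^{-1}\Phi^\ast) \circ F^k$; since $N \le \lfloor Tn\rfloor$, this is dominated by the stated maximum over $0 \le j \le \lfloor Tn\rfloor + 1$. The main obstacle will be the boundary piece: a naive triangle inequality through $\bar{g}_{N+1}$ picks up an error of order $|W_n(T) - \Phi_N/B(n)|$, which cannot be controlled by $\Phi^\ast$ in general, so the extension-and-restriction maneuver is what delivers the clean bound.
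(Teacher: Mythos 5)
Your decomposition into $N$ complete excursions plus one incomplete tail, and the identification of $U_n|_{[t_{n,j-1},t_{n,j}]}$ with $\bar{g}_j$ followed by Lemma~\ref{lem-M1b}(ii), is exactly the paper's argument (the paper writes $T_j = t_{n,j}\wedge T$, $j\le \lfloor Tn\rfloor+1$, so the tail piece is one genuine interval followed by degenerate singletons). Your observation that the tail is where the clean identification $\bar{g}_{N+1}=U_n|$ breaks — because $\bar{g}_{N+1}(T)=W_n(T)$ whereas $U_n(T)=\Phi_N/B(n)$ — is a real subtlety, and in fact the paper's own proof passes over it by simply asserting $\bar{g}_j=U_n(\cdot)|_{[T_{j-1},T_j]}$ for \emph{all} $j$.

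However, the repair you propose does not work. The claim that $d_{\mathcal{M}_1}$ is non-increasing under restriction to a subinterval is false. Any admissible parametrization of $\Gamma(g|_{[c,d]})$ must terminate at $(d,g(d))$, so the right-endpoint values are binding on $[c,d]$ even though they were not on the larger interval. Concretely, on $[0,2]$ take $g=1_{[1,2]}$ and $g'=1_{[1.5,2]}$: then $d_{\mathcal{M}_1,[0,2]}(g,g')=\tfrac12$ (shift the jump), but $g|_{[0,1.2]}=1_{[1,1.2]}$ and $g'|_{[0,1.2]}\equiv 0$, forcing $d_{\mathcal{M}_1,[0,1.2]}(g|,g'|)\ge |g(1.2)-g'(1.2)|=1$. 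Truncating an optimal pair of parametrizations of $[t_{n,N},t_{n,N+1}]$ at the position corresponding to $T$ leaves the two paths at different points of their respective graphs, so the bound on the bigger interval gives no control on the smaller one. Thus the ``extension-and-restriction maneuver'' does not deliver the stated bound for the tail piece, and the difficulty you correctly flagged remains open in your proposal.
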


\begin{proof}
Let $y\in Y$ and decompose $[0,T]$ according to the consecutive excursions,
letting $T_j :=t_{n,j}(y) \wedge T $, $j \leq m:=\lfloor Tn \rfloor+1$.
Consider $g(t):=W_n(t)(y)$, $t \in [0,T]$.
If we set $g_j:=g \vert _{[T_{j-1},T_j]} $, then
$\bar{g}_j$ as defined in Lemma \ref{lem-M1b} coincides
with $U_n(.)(y) \vert _{[T_{j-1},T_j]} $, so that
$$ d_{\mathcal{M}_1,T}(W_n(.)(y),U_n(.)(y)) \le
   \max_{0\le j\le m}
   d_{\mathcal{M}_1,[T_{j-1},T_j]}(g_j,\bar{g}_j). $$
But $ T_j - T_{j-1} =  n^{-1}r\circ F^j$, and since
$g_j(s)-g_j(t) =
B(n)^{-1} (\phi_{\ell^{\prime}}-\phi_{\ell}) \circ F^j (y)$,
for suitable $0 \leq \ell^{\prime} \leq \ell \leq r$,
we see that Lemma \ref{lem-M1b} gives
$$  d_{\mathcal{M}_1,[T_{j-1},T_j]}(g_j,\bar{g}_j) \leq
    (n^{-1}r + 2 B(n)^{-1}\Phi^\ast )\circ F^j, $$
as required.
\end{proof}

\begin{figure}
\centering
\includegraphics[height = 6cm]{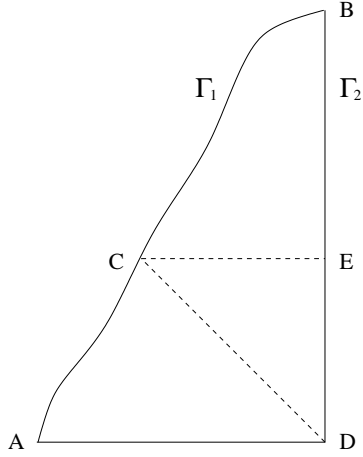}
\caption{A monotone excursion}
\label{fig-M1}
\end{figure}

\vspace{0.3cm}%

\noindent
\textbf{Proof of Theorem \ref{T_InducingWIP}.}
Fix any $T>0$.
By Lemma~\ref{L_ReturnTimeControl},
$n^{-1}\max\nolimits_{0\leq j\leq\left\lfloor Tn\right\rfloor +1}(r\circ
F^{j})\to 0$ a.e.\ and
by assumption~\eqref{Eq_EssMonotonicity}
$B(n)^{-1}\max\nolimits_{0\leq j\leq\left\lfloor Tn\right\rfloor +1}(\Phi^\ast\circ
F^{j})\overset{\mu_{Y}}{\Longrightarrow} 0$.
Hence Lemma~\ref{lem-M1} guarantees that
\begin{align} \label{eq-M1}
d_{\mathcal{M}_1,T}(W_n,U_n)\overset{\mu_{Y}}{\Longrightarrow}0.
\end{align}
Recall also from Lemma~\ref{L_UnCtyAtT} that
\begin{equation}
\left(  U_{n}(s)\right)  _{0\leq s\leq T}\overset{\mu_{Y}}{\Longrightarrow
}\left(  W(s\,\mu(Y))\right)  _{0\leq s\leq T}\text{\quad in }(\mathcal{D}%
[0,T],\mathcal{M}_{1})\text{.}\label{Eq_M1CgeWnFiniteT}%
\end{equation}
It follows (see ~\cite[Theorem~3.1]{Billingsley99})
from~\eqref{eq-M1} and~\eqref{Eq_M1CgeWnFiniteT} that
\begin{align*}
\left(  W_{n}(s)\right)  _{0\leq s\leq T}\overset{\mu_{Y}}{\Longrightarrow
}\left(  W(s\,\mu(Y))\right)  _{0\leq s\leq T}\text{\quad in }(\mathcal{D}%
[0,T],\mathcal{M}_{1})\text{.}
\end{align*}
This immediately gives $\left(  W_{n}(s)\right)  _{t\geq0}\overset{\mu_{Y}%
}{\Longrightarrow}\left(  W(s\,\mu(Y))\right)  _{s\geq0}$\ in $(\mathcal{D}%
[0,\infty),\mathcal{M}_{1})$. Strong distributional convergence as asserted in
(\ref{Eq_InducedWIPinM1}) follows via Proposition~\ref{P_AutomaticStrongDistrCge}.
$\blacksquare$%

\vspace{0.3cm}%

\section{Examples}
\label{sec-ex}

We continue to suppose that $f$ is an ergodic m.p.t.\ on a probability space
$(X,\mu)$ with first return map $F=f^r:Y\to Y$ where $\mu(Y)>0$.
Suppose further that the induced map $F:Y\to Y$ is Gibbs-Markov with ergodic
invariant probability measure $\mu_Y$ and
partition $\beta$, and that $r|_a$ is constant for each $a\in\beta$.
Let  $\phi:X\to\R$ be an $L^\infty$ mean zero observable, with
induced observable $\Phi:Y\to\R$.

\begin{theorem}\label{thm-const}
Suppose that $\phi$ is constant on $f^\ell a$ for every $a\in\beta$ and $\ell\in\{0,\dots,r|_a-1\}$.
If $\Phi$ lies in the domain of an $\alpha$-stable law, then
$\Phi$ satisfies the WIP in $(D,\mathcal{J}_1)$ with $B(n)=n^{1/\alpha}$.
If in addition condition~\eqref{Eq_EssMonotonicity} holds,  then $\phi$ satisfies
the WIP in $(D,\mathcal{M}_1)$.
\end{theorem}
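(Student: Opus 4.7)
The plan is to reduce Theorem~\ref{thm-const} to the inducing theorem, Theorem~\ref{T_InducingWIP}, by first establishing a $\mathcal{J}_1$ WIP for the induced observable $\Phi$ via the theory of Tyran-Kami\'nska~\cite{Tyran-Kaminska10}.

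The key structural observation is that, under the hypotheses, the induced observable $\Phi:Y\to\R$ is constant on each partition element $a\in\beta$. Indeed, since $r|_a$ is a constant $r_a$ and $\phi$ is constant on each $f^\ell a$ for $0\le\ell<r_a$, we have $\Phi|_a = \sum_{\ell=0}^{r_a-1}\phi|_{f^\ell a}$, a finite sum of constants. Thus $\Phi$ is $\beta$-measurable. For a Gibbs-Markov map $F$, a $\beta$-measurable observable behaves essentially like a stationary sequence with strong mixing properties (exponential $\psi$-mixing with respect to $\beta$), and this is precisely the setting in which the Tyran-Kami\'nska criteria for the $\mathcal{J}_1$ WIP apply most cleanly.

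The first step is therefore to invoke~\cite{Tyran-Kaminska10}: since $\Phi$ is $\beta$-measurable and lies in the domain of an $\alpha$-stable law, her sufficient conditions for weak convergence of the normalized partial-sum process to the corresponding $\alpha$-stable L\'evy process in $(\mathcal{D}[0,\infty),\mathcal{J}_1)$ are satisfied. The normalization is regularly varying of index $1/\alpha$ as required, and after absorbing the slowly-varying factor we obtain convergence with $B(n)=n^{1/\alpha}$. This proves the first assertion of the theorem.

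For the second assertion, we simply apply Theorem~\ref{T_InducingWIP}. Since $\mathcal{J}_1$ convergence implies $\mathcal{M}_1$ convergence (cf.~\cite[Theorem 12.3.2]{Whitt}), the hypothesis~\eqref{Eq_WIPinduced} holds with $B(n)=n^{1/\alpha}$ and $W$ an $\alpha$-stable L\'evy process. The monotonicity hypothesis~\eqref{Eq_EssMonotonicity} is assumed. Hence Theorem~\ref{T_InducingWIP} yields the WIP for $\phi$ on $(X,\mu)$ in $(\mathcal{D}[0,\infty),\mathcal{M}_1)$, as claimed.

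The main obstacle is the translation to Tyran-Kami\'nska's framework: one has to verify that her hypotheses (which are formulated for general measure-preserving systems) reduce, in the Gibbs-Markov setting with a $\beta$-measurable observable, to the bare domain-of-attraction assumption. This reduction is standard—exponential $\psi$-mixing on $\beta$-measurable functions supplies all the needed weak-dependence input—but it is the only non-formal step. Everything else (the passage from $\mathcal{J}_1$ to $\mathcal{M}_1$, and the lift from the induced to the full system) is a direct invocation of results already in place.
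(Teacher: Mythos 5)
Your proposal follows essentially the same route as the paper's proof: observe that under the hypotheses $\Phi$ is $\beta$-measurable, invoke Tyran-Kami\'nska's $\mathcal{J}_1$ criterion for Gibbs--Markov systems (the paper points to~\cite[Corollary~4.1]{Tyran-Kaminska10}, which is exactly the exponentially $\psi$-mixing case you describe), and then pass to the $\mathcal{M}_1$ WIP on $X$ via Theorem~\ref{T_InducingWIP}. The only small thing the paper makes explicit that you leave implicit is the citation of~\cite{AaronsonDenker01} for the underlying distributional convergence $n^{-1/\alpha}\Phi_n\to_d G$, which is the input Corollary~4.1 upgrades to a functional limit theorem.
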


\begin{proof}
By~\cite{AaronsonDenker01}, $n^{-1/\alpha}\Phi_n$ converges in distribution
to the given stable law.
The assumptions guarantee that the induced observable $\Phi$ is constant
on each $Y_j$.
Hence we can apply Tyran-Kami\'nska~\cite[Corollary~4.1]{Tyran-Kaminska10}
to deduce that $\Phi$ satisfies
the corresponding $\alpha$-stable WIP in $(D,\mathcal{J}_1)$.
In particular, condition~\eqref{Eq_WIPinduced} is satisfied.
The final statement follows from Theorem~\ref{T_InducingWIP}.
\end{proof}

\vspace{1ex}
For certain examples, including Pomeau-Manneville intermittency maps, we can work with general H\"older observables,
thus improving upon~\cite[Example~4.1]{Tyran-Kaminska10}.
The idea is to decompose the observable $\phi$ into a piecewise constant observable $\phi_0$ and a H\"older observable $\tilde\phi$ in such a way that
only $\phi_0$ ``sees'' the source of the anomalous behaviour.

In the remainder of this section, we carry out this procedure for the maps~\eqref{eq-LSV} and thereby prove Theorem~\ref{thm-LSV}.
(Lemma~\ref{lem-LY} and Proposition~\ref{prop-GMCLT} below hold in the general context of induced Gibbs-Markov maps.)

Fix $\theta\in(0,1)$ and let $d_\theta$ denote the
symbolic metric on $Y$, so $d_\theta(x,y)=\theta^{s(x,y)}$ where $s(x,y)$ is
the least integer $n\ge0$ such that $F^nx,F^ny$ lie in distinct elements
of $\beta$.
An observable $\Phi:Y\to\R$ is {\em piecewise Lipschitz} if 
$D_a(\Phi):=\sup_{x,y \in a, x\neq y}|\Phi(x)-\Phi(y)|/d_\theta(x,y) 
< \infty$ for each $a\in\beta$, and {\em Lipschitz} if  
$\|\Phi\|_\theta=|\Phi|_\infty+ \sup_{a\in\beta}  D_a(\Phi)<\infty$.  The space
$\Lip$ of Lipschitz observables $\Phi:Y\to\R$ is a Banach space.
Note that $\Phi$ is integrable with  
$\sum_{a\in\beta}\mu_Y(a)D_a(\Phi) < \infty$ if and only if  
$\sum_{a\in\beta}\mu_Y(a)\|1_a\Phi\|_\theta<\infty$.

Let $L$ denote the transfer operator for $F:Y\to Y$.

\begin{lemma} \label{lem-LY}
\textbf{a)}
The essential spectral radius of $L:\Lip\to\Lip$ is at most $\theta$.
\newline\textbf{b)}
Suppose that $\Phi:Y\to\R$ is a piecewise Lipschitz observable
satisfying
$\sum_{a\in\beta}\mu_Y(a)\|1_a\Phi\|_\theta<\infty$.
Then $L\Phi\in\Lip$.
\end{lemma}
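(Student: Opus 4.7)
The plan is to work directly with the explicit form of the transfer operator for the Gibbs-Markov map $F$. Writing $(L\Phi)(x)=\sum_{a\in\beta,\,Fa\ni x} g_a(x)\,\Phi(y_a(x))$, where $y_a=(F|_a)^{-1}$ is the inverse branch and $g_a$ is its inverse Jacobian, the standard Gibbs-Markov hypotheses (bounded distortion and big images) supply three ingredients that drive everything: (i) $|g_a(x)-g_a(y)|\le C_1\,g_a(y)\,d_\theta(x,y)$ for $x,y$ in a common element of $F|_a\beta$, (ii) $g_a(y)\le C_2\,\mu_Y(a)$ uniformly in $a$ and $y$, and (iii) $\sum_a g_a=1$ a.e.\ (since $L^\ast\mu_Y=\mu_Y$). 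The essential geometric input is that if $s(x,y)\ge 1$ then $s(y_a(x),y_a(y))=s(x,y)+1$, so $d_\theta(y_a(x),y_a(y))=\theta\,d_\theta(x,y)$.

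For part (a), I would decompose
\[
(L\Phi)(x)-(L\Phi)(y)=\sum_a\bigl(g_a(x)-g_a(y)\bigr)\Phi(y_a(x))+\sum_a g_a(y)\bigl(\Phi(y_a(x))-\Phi(y_a(y))\bigr),
\]
and apply (i)--(iii) to bound this by $(C_1|\Phi|_\infty+\theta\|\Phi\|_\theta)\,d_\theta(x,y)$ whenever $s(x,y)\ge 1$, while trivially $|L\Phi|_\infty\le|\Phi|_\infty$. The resulting Lasota-Yorke inequality $\|L\Phi\|_\theta\le \theta\|\Phi\|_\theta+(1+C_1)|\Phi|_\infty$ iterates to $\|L^n\Phi\|_\theta\le \theta^n\|\Phi\|_\theta+M_n|\Phi|_\infty$, and a standard application of Hennion's theorem converts this into the essential spectral radius bound $\theta$. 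I expect the one nontrivial step to be verifying the compactness ingredient required by Hennion, namely that the unit ball of $\Lip$ is relatively compact in a weaker norm (e.g.\ $L^1(\mu_Y)$); since $\beta$ is typically countable, one handles this by exhausting $\beta$ by finite subpartitions, applying Arzel\`a--Ascoli on each (using the uniform equicontinuity supplied by $d_\theta$), and controlling the tail via $|\Phi|_\infty\le\|\Phi\|_\theta$. This is standard within the Aaronson--Denker framework, so I would either adapt the argument there or quote it directly.

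For part (b), the same branchwise decomposition applied to a piecewise Lipschitz $\Phi$ gives, for $s(x,y)\ge 1$,
\[
|(L\Phi)(x)-(L\Phi)(y)|\le \sum_a g_a(y)\bigl(C_1|1_a\Phi|_\infty+\theta D_a(\Phi)\bigr)d_\theta(x,y)\le C_3\,d_\theta(x,y)\sum_a\mu_Y(a)\|1_a\Phi\|_\theta,
\]
using (i)--(ii) and $|\Phi(y_a(x))|\le|1_a\Phi|_\infty$, $|\Phi(y_a(x))-\Phi(y_a(y))|\le D_a(\Phi)\theta d_\theta(x,y)$. Similarly $|L\Phi|_\infty\le C_2\sum_a\mu_Y(a)|1_a\Phi|_\infty\le C_2\sum_a\mu_Y(a)\|1_a\Phi\|_\theta$. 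Under the stated summability hypothesis both quantities are finite, and crucially the Lipschitz bound is uniform in the element of $\beta$ containing $x,y$, so $L\Phi\in\Lip$. The equivalence noted just before the lemma between the two forms of the summability condition (via $\|1_a\Phi\|_\theta\asymp|1_a\Phi|_\infty+D_a(\Phi)$) means no additional hypothesis is required to close the estimate.
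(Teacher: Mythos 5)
The paper's own ``proof'' is a one-line citation (Aaronson--Denker, Theorem~1.6, for (a); Melbourne--Nicol, Lemma~2.2, for (b)), so your proposal reconstructs the content of those references rather than an argument printed in the paper. Your reconstruction follows the standard branchwise route for Gibbs--Markov transfer operators, and part~(b) is complete and correct as written: the decomposition, the contraction $d_\theta(y_a(x),y_a(y))=\theta\,d_\theta(x,y)$, the distortion bound, and the big-image bound $g_a\le C_2\mu_Y(a)$ combine exactly as you say, and the hypothesis $\sum_a\mu_Y(a)\|1_a\Phi\|_\theta<\infty$ is precisely what makes both $|L\Phi|_\infty$ and $\sup_bD_b(L\Phi)$ finite.

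For part~(a) there is one technical mismatch you should be aware of, even though you partially flag it yourself. Your Lasota--Yorke inequality has $|\Phi|_\infty$ in the weak slot, but the compactness ingredient you then invoke is relative compactness of the $\Lip$ unit ball in $L^1(\mu_Y)$. Hennion's theorem needs the \emph{same} auxiliary norm in both places; since $\beta$ is in general countably infinite, the $\Lip$ unit ball is not $|\cdot|_\infty$-relatively compact, so you cannot use $|\cdot|_\infty$ as the weak norm, and if you switch to $|\cdot|_1$ the Lasota--Yorke inequality must be re-derived: the naive substitution $|1_a\Phi|_\infty\le\mu_Y(a)^{-1}|1_a\Phi|_1+D_a(\Phi)$ reintroduces $\sup_aD_a(\Phi)$ with a coefficient that spoils the $\theta^n$ contraction. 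This is genuinely a point where one must either do extra work (e.g.\ approximate $L^n$ directly by finite-rank operators obtained by truncating the branch sum, which is in effect what Aaronson--Denker do) or quote the reference wholesale. So the overall plan is right and matches what the cited sources do, but the ``standard application of Hennion'' as you wrote it does not close by itself.
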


\begin{proof}
This is standard.   See for example~\cite[Theorem~1.6]{AaronsonDenker01} for part (a) and~\cite[Lemma~2.2]{MN05} for part~(b).
\end{proof}

\begin{proposition} \label{prop-GMCLT}
Let $\Phi:Y\to\R$ be a piecewise Lipschitz mean zero observable lying in $L^p$,
for some $p\in(1,2)$.  Assume that
$\sum_{a\in\beta}\mu_Y(a)\|1_a\Phi\|_\theta<\infty$.
Then $\max_{j=0,\dots,n-1}n^{-\gamma}\Phi_j\to_d0$ for all $\gamma>1/p$.
\end{proposition}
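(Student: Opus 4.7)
The plan is to perform a martingale--coboundary decomposition for $\Phi$ and then apply a standard $L^p$ maximal inequality for martingales. Since the conclusion is distributional convergence to the constant $0$, it is enough to show $\|\max_{j\le n}|\Phi_j|\|_p=O(n^{1/p})$; division by $n^\gamma$ with $\gamma>1/p$ then delivers convergence in $L^p$, hence in probability, hence in distribution.

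First I would construct a bounded coboundary. By Lemma~\ref{lem-LY}(b), the hypothesis $\sum_{a\in\beta}\mu_Y(a)\|1_a\Phi\|_\theta<\infty$ forces $L\Phi\in\Lip$. Combined with the spectral gap of $L$ on $\Lip$ (quasi-compactness from Lemma~\ref{lem-LY}(a), plus $1$ being a simple isolated eigenvalue for Gibbs--Markov maps), the vanishing of $\int L\Phi\,d\mu_Y$ yields geometric decay $\|L^k\Phi\|_\theta\le C\rho^k$ for some $\rho<1$. Hence $\chi:=\sum_{k=1}^\infty L^k\Phi$ converges in $\Lip$, so in particular $\chi\in L^\infty$. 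Setting $m:=\Phi-\chi\circ F+\chi$, the identity $L(\chi\circ F)=\chi$ together with $(I-L)\chi=L\Phi$ gives $Lm=0$. Telescoping yields $\Phi_n=M_n+\chi\circ F^n-\chi$ with $M_n:=\sum_{j=0}^{n-1}m\circ F^j$, so since $\chi$ is bounded it suffices to bound $\max_{j\le n}|M_j|$ in $L^p$.

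Next I would extract a forward martingale by time reversal. The identity $Lm=0$ is equivalent to $E[m\circ F^j\mid F^{-(j+1)}\mathcal{B}]=0$, so for each fixed $n$ the variables $\tilde m_i:=m\circ F^{n-i}$, $1\le i\le n$, are forward martingale differences with respect to the increasing filtration $\mathcal{F}_i:=F^{-(n-i)}\mathcal{B}$. Writing $Z_j:=\sum_{k=j}^{n-1}m\circ F^k$, one checks $Z_j=\tilde M_{n-j}$ and $M_j=M_n-Z_j$, giving the pointwise bound
\[
\max_{0\le j\le n}|M_j|\le |M_n|+\max_{0\le i\le n}|\tilde M_i|.
\]
Doob's $L^p$ maximal inequality for $\tilde M$ yields $\|\max_i|\tilde M_i|\|_p\le\frac{p}{p-1}\|M_n\|_p$, and the Burkholder inequality (or, equivalently in this range, von Bahr--Esseen) for martingale differences gives $\|M_n\|_p^p\le C_p\,n\,\|m\|_p^p$ for $p\in(1,2]$. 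Since $\Phi\in L^p$ and $\chi\in L^\infty$ imply $m\in L^p$, combining everything delivers $\|\max_{j\le n}|\Phi_j|\|_p\le Cn^{1/p}\|m\|_p+2\|\chi\|_\infty$.

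The main obstacle is constructing the coboundary decomposition with $\chi$ bounded, starting from a merely piecewise Lipschitz $\Phi$: this is precisely where Lemma~\ref{lem-LY}(b) does the essential work, promoting $L\Phi$ to Lipschitz so that the spectral-gap argument in $\Lip$ becomes applicable. Once $\chi$ is in hand the remainder is standard martingale analysis; the only mild subtlety is that the natural filtration with respect to which $m\circ F^j$ is adapted decreases in $j$, so one must reverse time before invoking the classical Doob inequality, as indicated above.
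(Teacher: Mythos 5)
Your overall strategy is the same as the paper's --- Gordin-type martingale--coboundary decomposition via $\chi=\sum_{k\ge1}L^k\Phi\in\Lip$, then Burkholder plus a Doob maximal inequality --- so the core of the argument is right, and the time-reversal device you use to turn the reverse martingale $\{\hat\Phi_j\}$ into a forward one before invoking Doob is a legitimate alternative to applying the reverse-martingale versions of those inequalities directly, as the paper does.

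However, there is a real gap in the step that produces the coboundary. You assert that quasi-compactness of $L$ on $\Lip$ together with simplicity of the eigenvalue $1$ gives geometric decay $\|L^k\Phi\|_\theta\le C\rho^k$ for mean-zero $\Phi$. That conclusion requires that $1$ be the \emph{only} spectral point of $L$ on the unit circle, i.e.\ that $F$ be weak mixing. An ergodic Gibbs--Markov map need not be weak mixing, and the proposition is stated (and used) at that level of generality. If $L$ has peripheral eigenvalues $e^{i\omega_\ell}\neq1$, the series $\sum_{k\ge1}L^k\Phi$ need not converge, and your $\chi$ is not defined. The paper handles exactly this: it first proves the result under the weak-mixing assumption and then removes it by splitting $\Phi=\Psi_0+\sum_{\ell=1}^k\Psi_\ell$, where $\Psi_0$ has geometrically decaying iterates and each $\Psi_\ell$ is an eigenfunction with $L\Psi_\ell=e^{i\omega_\ell}\Psi_\ell$, whence $\Psi_\ell\circ F=e^{-i\omega_\ell}\Psi_\ell$ and the ergodic sums $\sum_{j=1}^n\Psi_\ell\circ F^j$ are uniformly bounded. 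You would need to add this reduction (or restrict to the weak-mixing case and justify why that suffices for the intended application) for the proof to be complete.

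A minor remark: your final bound $\|\max_{j\le n}|\Phi_j|\|_p\le Cn^{1/p}\|m\|_p+2\|\chi\|_\infty$ and the ensuing $L^p$ convergence of $n^{-\gamma}\max_j|\Phi_j|$ is a slightly stronger conclusion than the paper states; the paper stops at convergence in probability via Markov's inequality, which is all that is needed.
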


\begin{proof}
Suppose first that $F$ is weak mixing (this assumption is removed below).
Then $L:\Lip\to\Lip$ has no eigenvalues on the unit circle
except for the simple eigenvalue at $1$ (corresponding to constant functions).
By Lemma~\ref{lem-LY}(a), there exists $\tau<1$ such that the remainder of the
spectrum of $L$ lies strictly inside the ball of radius $\tau$.
In particular, there is a constant $C>0$ such that
$\|L^nv-\int v\,d\mu_Y\|\le C\tau^n\|v\|$ for all $v\in\Lip$, $n\ge1$.

By Lemma~\ref{lem-LY}(b), $L\Phi\in\Lip$.
Hence $\chi=\sum_{j=1}^\infty L^j\Phi\in\Lip$.
Following Gordin~\cite{Gordin69}, write $\Phi=\hat \Phi+\chi\circ F-\chi$.   
Then $\hat \Phi\in L^p$ (since $\chi\in\Lip$
and $\Phi\in L^p$).
Applying  $L$ to both sides and noting that $L(\chi\circ F)=\chi$,
we obtain that $L\hat \Phi=0$.
It follows that the sequence $\{\hat \Phi_n;\,n\ge1\}$ defines
a reverse martingale sequence.

By Burkholder's
inequality~\cite[Theorem~3.2]{Burkholder73},
\begin{align*}
|\hat\Phi_n|_p & \ll \Bigl\|\Bigl(\sum_{j=1}^n\hat\Phi^2\circ F^j\Bigr)^{1/2}\Bigr\|_p
=\Bigl(\int\Bigl(\sum_{j=1}^n\hat\Phi^2\circ F^j\Bigr)^{p/2}\Bigr)^{1/p}
\\ &
\le \Bigl(\int\sum_{j=1}^n|\hat\Phi|^p\circ F^j\Bigr)^{1/p}=|\hat\Phi|_p n^{1/p}.
\end{align*}
By Doob's inequality~\cite{Doob53} (see also~\cite[Equation~(1.4), p.~20]{Burkholder73},
$|\max_{j=0,\dots,n-1}\hat\Phi_j|_p\ll |\hat\Phi_n|_p\ll n^{1/p}$.
By Markov's inequality, for $\epsilon>0$ fixed,
$\mu_Y(|\max_{j=0,\dots,n-1}\hat\Phi_j|\ge \epsilon n^\gamma)\le |\max_{j=0,\dots,n-1}\hat\Phi_j|_p^p/(\epsilon^pn^{\gamma p})
\ll n^{-(\gamma p-1)}\to0$ as $n\to\infty$.
Hence $\max_{j=0,\dots,n-1}n^{-\gamma}\hat\Phi_j\to_d0$.
Since $\Phi$ and $\hat\Phi$ differ by a bounded coboundary, 
$\max_{j=0,\dots,n-1}n^{-\gamma}\Phi_j\to_d0$ as required.

It remains to remove the assumption about eigenvalues (other than $1$)
for $L$ on the unit circle.    Suppose that there are $k$ such eigenvalues $e^{i\omega_\ell}$,
$\omega_\ell\in(0,2\pi)$, $\ell=1,\dots,k$ (including multiplicities).
Then we can write $\Phi=\Psi_0+\sum_{\ell=1}^k\Psi_\ell$ where
$\|L^n\Psi_0\|_\theta\le C\tau^n\|\Psi_0\|_\theta$ and $L \Psi_\ell=e^{i\omega_\ell}\Psi_\ell$.
In particular, the above argument applies to $\Psi_0$, while
$L\Psi_\ell=e^{i\omega_\ell}\Psi_\ell$, $\ell=1,\dots k$.

A simple argument (see~\cite{MN04b}) shows that $\Psi_\ell \circ F
= e^{-i\omega_\ell}\Psi_\ell$ for $\ell=1,\dots,k$, so that
$|\sum_{j=1}^n \Psi_\ell\circ F^j|_\infty\le 2|e^{i\omega_\ell}-1|^{-1}|\Psi_\ell|_\infty$
which is bounded in $n$.   Hence the estimate for $\Phi$ follows from the one for
$\Psi_0$.
\end{proof}

\vspace{1ex}
\begin{proof}[\textbf{Proof of Theorem~\ref{thm-LSV}}]
We verify the hypotheses of Theorem~\ref{T_InducingWIP}.
A convenient inducing set for the maps~\eqref{eq-LSV} is $Y=[\frac12,1]$.
Let $\phi_0=\phi(0)-\mu(Y)^{-1}\phi(0)1_Y$.
(The first term is the important one, and the second term is simply an arbitrary
choice that ensures that $\phi_0$ has mean zero while preserving the piecewise
constant requirement in Theorem~\ref{thm-const}.)
Write $\phi=\phi_0+\tilde \phi$ and note that
$\tilde\phi$ is a mean zero piecewise H\"older observable vanishing at $0$.
We have the corresponding decomposition $\Phi=\Phi_0+\tilde\Phi$ for
the induced observables.
By Theorem~\ref{thm-const},
$\Phi_0$ satisfies the WIP (in the $\mathcal{J}_1$ topology).

Let $\eta$ denote the H\"older exponent of $\phi$.
By the proof of~\cite[Theorem~1.3]{Gouezel04}, $\tilde \phi$ induces to a
piecewise Lipschitz mean zero observable
$\tilde \Phi$ satisfying $\sum_{a\in\beta}\mu_Y(a)\|1_a\tilde\Phi\|_\theta<\infty$
for suitably chosen $\theta$.
Moreover~\cite{Gouezel04} shows that $\tilde\Phi$ lies in $L^2$ provided that
$\eta>\gamma-\frac12$.
Exactly the same argument shows that
$\tilde \Phi$ lies in $L^p$ provided $\eta>\gamma-\frac{1}{p}$.
In particular, for any $\eta>0$, there exists $p>1/\gamma$ such that
$\tilde\Phi\in L^p$.
Since we are normalising by $B(n)=n^{1/\alpha}=n^{\gamma}$, it follows
from Proposition~\ref{prop-GMCLT} that $\tilde \Phi$ does not
contribute to the WIP.

Combining the results for $\Phi_0$ and $\tilde\Phi$, we deduce that
$\Phi$ satisfies the WIP (in the $\mathcal{J}_1$ topology).
In particular, condition~\eqref{Eq_WIPinduced} is satisfied.

It remains to verify condition~\eqref{Eq_PointwiseMonotonicityCtrl}.
In fact, we show that $\Phi^\ast$ is bounded.
Suppose that $\phi(0)>0$ (the case $\phi(0)<0$ is treated similarly).
Choose $\epsilon>0$ such that $\phi>0$ on $[0,\epsilon]$.
Define the decreasing sequence $x_n\in(0,\frac12)$ where $f(x_n)=x_{n-1}$,
$x_1=\frac12$,
and let $k$ be such that $x_n\in(0,\epsilon)$ for all $n\ge k$.
Then for $y\in Y$, $f^\ell y\in[0,\epsilon]$ for $0\le \ell\le r(y)-k$.

Now observe that
\begin{itemize}
\item[(i)]
$\phi_{\ell^\prime}(y)-\phi_\ell(y)\le 0$ for $1\le\ell^\prime\le\ell\le r(y)-k$,
\item[(ii)] $\phi_{\ell^\prime}(y)-\phi_\ell(y)\le k|\phi|_\infty$ for $r(y)-k\le\ell^\prime\le\ell\le r(y)$,
\item[(iii)] $\phi_{\ell^\prime}(y)-\phi_\ell(y)\le
\phi_{r(y)-k}(y)-\phi_\ell(y)\le
k|\phi|_\infty$ for $1\le\ell^\prime\le r(y)-k\le\ell\le r(y)$,
\end{itemize}
Hence
\begin{align*}
\Phi^{\ast}(y) & \le
\max_{0\le\ell^\prime\le\ell\le r(y)}(\phi_{\ell^\prime}(y)-\phi_\ell(y))
\\ & \le  |\phi|_\infty +\max_{1\le\ell^\prime\le\ell\le r(y)}(\phi_{\ell^\prime}(y)-\phi_\ell(y))
\le (k+1)|\phi|_\infty,
\end{align*}
as required.
\end{proof}

\begin{remark}
The arguments in the proof of Theorem~\ref{thm-LSV} apply to a much wider class
of examples, including intermittent maps with neutral periodic points
or with multiple neutral fixed/periodic points.
In such cases, condition~\eqref{Eq_EssMonotonicity} is again automatically satisfied.
\end{remark}

\paragraph{Acknowledgements}
The research of IM was supported in part by EPSRC Grant EP/F031807/1 held at the University of Surrey.
We are very grateful to the referee for very helpful suggestions that led to a significantly simplified proof of the main result in this paper.


\begin{thebibliography}{10}

\bibitem{AaronsonDenker01}
J.~Aaronson and M.~Denker. {Local limit theorems for partial sums of stationary
  sequences generated by Gibbs-Markov maps}. \emph{Stoch. Dyn.} \textbf{1}
  (2001) 193--237.

\bibitem{AvramTaqqu92}
F.~Avram and M.~S. Taqqu. Weak convergence of sums of moving averages in the
  {$\alpha$}-stable domain of attraction. \emph{Ann. Probab.} \textbf{20}
  (1992) 483--503.

\bibitem{BalintChernovDolgopyat11}
P.~B{\'a}lint, N.~Chernov and D.~Dolgopyat. Limit theorems for dispersing
  billiards with cusps.
  \emph{Comm. Math. Phys.} \textbf{308} (2011) 479--510.

\bibitem{BalintGouezel06}
P.~B{\'a}lint and S.~Gou{\"e}zel. Limit theorems in the stadium billiard.
  \emph{Comm. Math. Phys.} \textbf{263} (2006) 461--512.

\bibitem{BM08}
P.~B{\'a}lint and I.~Melbourne. Decay of correlations and invariance principles
  for dispersing billiards with cusps, and related planar billiard flows.
  \emph{J. Stat. Phys.} \textbf{133} (2008) 435--447.

\bibitem{BenArousCerny07}
G.~Ben~Arous and J.~{\v{C}}ern{\'y}. Scaling limit for trap models on {$\mathbb
  Z^d$}. \emph{Ann. Probab.} \textbf{35} (2007) 2356--2384.

\bibitem{BenedicksYoung00}
M.~Benedicks and L.-S. Young. Markov extensions and decay of correlations for
  certain {H}\'enon maps. \emph{Ast\'erisque} (2000) no.~261, 13--56.

\bibitem{Billingsley99}
P.~Billingsley. \emph{Convergence of probability measures}, second ed., Wiley
  Series in Probability and Statistics: Probability and Statistics, John Wiley
  \& Sons Inc., New York, 1999.

\bibitem{Burkholder73}
D.~L. Burkholder. Distribution function inequalities for martingales.
  \emph{Ann. Probability} \textbf{1} (1973) 19--42.

\bibitem{ConzeBorgne01}
J.-P. Conze and S.~Le Borgne. {M{\'e}thode de martingales et flow
  g{\'e}od{\'e}sique sur une surface de courbure constante n{\'e}gative}.
  \emph{Ergodic Theory Dynam. Systems} \textbf{21} (2001) 421--441.

\bibitem{DedeckerMerlevede}
J.~Dedecker and F.~Merlev\`{e}de. {Weak invariance principle and exponential
  bounds for some special functions of intermittent maps}. \emph{High
  dimensional probability} \textbf{5} (2009) 60--72.

\bibitem{DenkerPhilipp84}
M.~Denker and W.~Philipp. {Approximation by Brownian motion for Gibbs measures
  and flows under a function}. \emph{Ergodic Theory Dynam. Systems} \textbf{4}
  (1984) 541--552.

\bibitem{Doob53}
J. L. Doob.  {\em Stochastic processes}.   Wiley, New York, 1953.

\bibitem{Eagleson76}
G.~K. Eagleson. {Some simple conditions for limit theorems to be mixing}.
  \emph{Teor. Verojatnost. i Primenen} \textbf{21} (1976) 653--660.

\bibitem{FMT03}
M.~J. Field, I.~Melbourne and A.~T{\" o}r{\" o}k. {Decay of correlations,
  central limit theorems and approximation by Brownian motion for compact Lie
  group extensions}. \emph{Ergodic Theory Dynam. Systems} \textbf{23} (2003)
  87--110.

\bibitem{Gordin69}
M.~I. Gordin. {The central limit theorem for stationary processes}.
  \emph{Soviet Math. Dokl.} \textbf{10} (1969) 1174--1176.

\bibitem{GMsub}
G.~A. Gottwald and I.~Melbourne. {Central limit theorems and suppression of
  anomalous diffusion for systems with symmetry}. Preprint.

\bibitem{Gouezel04}
S.~Gou{\"e}zel. {Central limit theorem and stable laws for intermittent maps}.
  \emph{Probab. Theory Relat. Fields} \textbf{128} (2004) 82--122.

\bibitem{Gouezel07}
S.~Gou{\"e}zel. {Statistical properties of a skew product with a curve of
  neutral points}. \emph{Ergodic Theory Dynam. Systems} \textbf{27} (2007)
  123--151.

\bibitem{Gouezel10}
S.~Gou{\"e}zel. Almost sure invariance principle for dynamical systems by
  spectral methods. \emph{Ann. Probab.} \textbf{38} (2010) 1639--1671.

\bibitem{HofbauerKeller82}
F.~Hofbauer and G.~Keller. {Ergodic properties of invariant measures for
  piecewise monotonic transformations}. \emph{Math. Z.} \textbf{180} (1982)
  119--140.

\bibitem{HM07}
M.~Holland and I.~Melbourne. {Central limit theorems and invariance principles
  for Lorenz attractors}. \emph{J. London Math. Soc.} \textbf{76} (2007)
  345--364.

\bibitem{LiveraniSaussolVaienti99}
C.~Liverani, B.~Saussol and S.~Vaienti. {A probabilistic approach to
  intermittency}. \emph{Ergodic Theory Dynam. Systems} \textbf{19} (1999)
  671--685.

\bibitem{MN04b}
I.~Melbourne and M.~Nicol. Statistical properties of endomorphisms and compact
  group extensions. \emph{J. London Math. Soc.} \textbf{70} (2004) 427--446.

\bibitem{MN05}
I.~Melbourne and M.~Nicol. Almost sure invariance principle for nonuniformly
  hyperbolic systems. \emph{Commun. Math. Phys.} \textbf{260} (2005) 131--146.

\bibitem{MN09}
I.~Melbourne and M.~Nicol. A vector-valued almost sure invariance principle for
  hyperbolic dynamical systems. \emph{Ann. Probability} \textbf{37} (2009)
  478--505.

\bibitem{MT02}
I.~Melbourne and A.~T{\" o}r{\" o}k. {Central limit theorems and invariance
  principles for time-one maps of hyperbolic flows}. \emph{Commun. Math. Phys.}
  \textbf{229} (2002) 57--71.

\bibitem{MT04}
I.~Melbourne and A.~T{\" o}r{\" o}k. {Statistical limit theorems for suspension
  flows}. \emph{Israel J. Math.} \textbf{144} (2004) 191--209.

\bibitem{PomeauManneville80}
Y.~Pomeau and P.~Manneville. Intermittent transition to turbulence in
  dissipative dynamical systems. \emph{Comm. Math. Phys.} \textbf{74} (1980)
  189--197.

\bibitem{Ratner73}
M.~Ratner. {The central limit theorem for geodesic flows on $n$-dimensional
  manifolds of negative curvature}. \emph{Israel J. Math.} \textbf{16} (1973)
  181--197.


\bibitem{Skorohod56}
A.~V. Skorohod. Limit theorems for stochastic processes. \emph{Teor.
  Veroyatnost. i Primenen.} \textbf{1} (1956) 289--319.

\bibitem{SzaszVarju07}
D.~Sz{\'a}sz and T.~Varj{\'u}. Limit laws and recurrence for the planar
  {L}orentz process with infinite horizon. \emph{J. Stat. Phys.} \textbf{129}
  (2007) 59--80.

\bibitem{Tyran-Kaminska10}
M.~Tyran-Kami{\'n}ska. Weak convergence to {L}\'evy stable processes in
  dynamical systems. \emph{Stoch. Dyn.} \textbf{10} (2010) 263--289.

\bibitem{Whitt}
W.~Whitt. \emph{Stochastic-process limits}. Springer Series in Operations
  Research, Springer-Verlag, New York, 2002.

\bibitem{Young98}
L.-S. Young. Statistical properties of dynamical systems with some
  hyperbolicity. \emph{Ann. of Math.} \textbf{147} (1998) 585--650.

\bibitem{Young99}
L.-S. Young. Recurrence times and rates of mixing. \emph{Israel J. Math.}
  \textbf{110} (1999) 153--188.

\bibitem{Zweimuller03}
R.~Zweim{\"u}ller. Stable limits for probability preserving maps with
  indifferent fixed points. \emph{Stoch. Dyn.} \textbf{3} (2003) 83--99.

 \bibitem{Zweimueller07}
 R.~Zweim{\"u}ller. Mixing limit theorems for ergodic transformations. \emph{J.
   Theoret. Probab.} \textbf{20} (2007) 1059--1071.


\end{thebibliography}
\end{document}